\documentclass[12pt,twoside]{amsart}
\usepackage{latexsym,amsmath,amsopn,amssymb,amsthm,amsfonts}
\usepackage[T2A]{fontenc}
\usepackage[cp1251]{inputenc}
\usepackage[russian,english]{babel}
\usepackage{graphicx}

\textwidth=15cm \oddsidemargin=0.5cm \topmargin=-1cm
\textheight=22cm \evensidemargin=0.5cm

\newtheorem{theorem}{Theorem}

\newtheorem{proposition}{Proposition}

\newtheorem{remark}{Remark}

\newtheorem{definition}{Definition}

\newtheorem{corollary}{Corollary}

\newtheorem{lemma}{Lemma}

\newcommand{\Lin}{\operatorname{Lin}}

\newcommand{\Ann}{\operatorname{Ann}}

\newcommand{\Sh}{\operatorname{sh}}
\newcommand{\Ch}{\operatorname{ch}}

\newcommand{\Trace}{\operatorname{trace}}
\newcommand{\Det}{\operatorname{det}}
\newcommand{\Const}{\operatorname{const}}
\newcommand{\ad}{\operatorname{Ad}}

\newcommand{\Th}{\operatorname{th}}
\newcommand{\Exp}{\operatorname{Exp}}
\newcommand{\trace}{\operatorname{trace}}
\newcommand{\Sim}{\operatorname{Sim}}
\newcommand{\Tg}{\operatorname{tg}}

\begin{document}
\begin{flushleft}
UDK 519.46 + 514.763 + 512.81 + 519.9 + 517.911
\end{flushleft}
\begin{flushleft}
MSC 22E30, 49J15, 53C17
\end{flushleft}

\title[group $SL(2)$]{Geodesics and shortest arcs of special sub-Riemannian metric on the Lie group  $SL(2)$}
\author{V.\,N.\,Berestovskii, I.\,A.\,Zubareva,}
\thanks{The work is partially supported by the Russian Foundation for Basic Research (Grant 14-01-00068-a), a grant of the Government of the Russian federation for the State Support of Scientific Research (Agreement \No 14.B25.31.0029), and State Maintenance Program for the Leading Scientific Schools of the Russian Federation (Grant NSh-2263.2014.1)}
\address{V.N.Berestovskii}
\address{The Sobolev Institute of Mathematics SD RAS, \newline 4 Acad. Koptyug avenue, 630090, Novosibirsk, Russia}
\email{vberestov@inbox.ru}
\address{I.A.Zubareva}
\address{The Sobolev Institute of Mathematics, Omsk Branch,  \newline 13 Pevtsova street, 644043, Omsk, Russia}
\email{i\_gribanova@mail.ru}
\maketitle
\maketitle {\small
\begin{quote}
\noindent{\sc Abstract.}
The authors found geodesics, shortest arcs, cut loci, and conjugate sets for left-invariant sub-Riemannian metric on the Lie group $SL(2)$, which is right-invariant relative to the Lie subgroup $SO(2)\subset SL(2)$ (in other words, for invariant sub-Riemannian metric on weakly symmetric space $(SL(2)\times SO(2))/SO(2)$).
\end{quote}}

{\small
\begin{quote}
\noindent{\textit{Keywords and phrases:}} cut locus, conjugate set, geodesic, geodesic orbit space, Lie algebra, Lie group, invariant sub-Riemannian metric, shortest arc, weakly symmetric space.
\end{quote}}

\section*{Introduction}

In this paper, by means of general methods from \cite{Ber0},  we found geodesics, shortest arcs, cut loci and conjugate sets of left-invariant sub-Riemannian metric on the Lie group $SL(2)$ with condition that the metric is right-invariant relative to the  Lie subgroup $SO(2)\subset SL(2).$ A formula, analogous to (\ref{sol}), and statements of Theorem \ref{th} have been given with no proof in \cite{BR} with references to some sources; also there were proved statements of Theorem  \ref{cutloc}, but we apply other methods and prove in detail all results.

We got analogous results for special left-invariant sub-Riemannian metrics on the Lie groups $SO_0(2,1)=SL(2)/\{\pm e\}$ and $SO(3)\cong SU(2)/\{\pm e\}$ in \cite{Ber1} and \cite{BerZub}. In these papers, together with analogs of  (\ref{sol}) and Theorem \ref{th}, to find geodesics and shortest arcs we use their geometric interpretation as parallel unit vector fields along geodesics and isoperimetrices (solutions of Dido's problem, i.e. the curves of constant geodesic curvature) on the Lobachevskii plane $L^2$ and the unit Euclidean sphere $S^2$, as well as the Gauss--Bonnet theorems for $L^2$ and $S^2.$ In this paper, for this purpose we apply directly Theorem \ref{th}.

It is necessary to note especially that all sub-Riemannian manifolds under consideration in this paper and papers \cite{Ber1}, \cite{BerZub} are \textit{geodesic orbit}, i.e. every geodesic of such manifold is an orbit of  some 
1--parameter isometry group. This is closely connected to the fact that 
one can consider these manifolds as  \textit{weakly symmetric spaces} $(SL(2)\times SO(2))/SO(2),$ $(SO_0(2,1)\times SO(2))/SO(2),$ $(SO(3)\times SO(2))/SO(2)$ with invariant sub-Riemannian metric.
A.~Selberg introduced weakly symmetric spaces in paper \cite{Selb}, where he considered $(SL(2)\times SO(2))/SO(2)$ as unique (nonsymmetric) example of such spaces. O.S.~Yakimova gives in \cite{Yak} a classification of (simply connected) weakly symmetric Riemannian manifolds with a reductive isometry group; the third of above-mentioned spaces is given in line 8 of Table 1 of this paper. It is well known that every weakly symmetric Riemannian manifold with invariant Riemannian metric is geodesic orbit. Since any invariant sub-Riemannian metric on weakly symmertic space is a limit of a sequence of invariant Riemannian metrics, then it is true that \textit{weakly symmetric space with invariant sub-Riemannian metric with no abnormal geodesic is geodesic orbit}. Let us note in this regard that
\textit{symmetric spaces admit no invariant sub-Riemannian metric}
\cite{Ber2}, \cite{BGo}.

\section{Preliminaries}

The Lie group $GL(n)$ consists of all real $(n\times n)$--matrices $g=(g_{ij})$, $i,j=1,\dots,n$, such that  $\Det g\neq 0$, and the Lie subgroup $GL_0(n)$ (the connected component of the unit $e$ in $Gl(n)$) is defined by condition $\Det g>0$.  It is natural to consider both groups as open submanifolds in
$\mathbb{R}^{n^2}$ with coordinates $g_{ij}$, $i,j=1,\dots,n$.

Their Lie algebra $\mathfrak{gl}(n)=GL(n)_e:=GL_0(n)_e=\mathbb{R}^{n^2}$ is a vector space of all real $(n\times n)$--matrices with Lie bracket 
$$[a,b]=ab-ba;\quad a,b\in\mathfrak{gl}(n).$$

Let $e_{ij}\in \frak{gl}(n),$ $i,j=1,\dots n$, be a matrix which has 1 in $i$-th row and $j$-th column and 0 in all other places. $\Lin(a,b)$ denotes the linear span of vectors $a,\,b$. As an auxiliary tool we shall use the standard scalar product $(\cdot,\cdot)$ on the Lie algebra $\frak{gl}(n)=\mathbb{R}^{n^2}$ for $n=2$.

The Lie group $SL(n)\subset GL(n)$ of all real $(n\times n)$--matrices with the determinant 1 is a closed connected Lie subgroup of the Lie group $Gl_0(n)$  with the Lie algebra
\begin{equation}
\label{sl}
\mathfrak{sl}(n)=\{a\in\mathfrak{gl}(n)\,:\,\Trace(a)=\sum\limits_{l=1}^{n}a_{ll}=0\}.
\end{equation}

In case of left-invariant sub-Riemannian metric on a Lie group, every geodesic
is a left shift of some geodesic which starts at the unit. Thus later we shall consider only geodesics with unit origin. Theorem 5 in paper \cite{Ber0} implies the following theorem.

\begin{theorem}
\label{theor0}
Let $G$ be a connected Lie subgroup of the Lie group $GL(n)$
with the Lie algebra $\frak{g},$ $D$ is some totally nonholonomic left-invariant distribution on $G,$ a scalar product $\langle \cdot,\cdot\rangle$ on $D(e)$ is proportional to the restriction of the scalar product $(\cdot,\cdot)$ (to $D(e)$). Then every normal geodesic (i.e. locally shortest arc), parametrized by arclength, $\gamma=\gamma(t)$, $t\in (-a,a)\subset\mathbb{R}$, $\gamma(0)=e$, on $(G,d)$ with left-invariant sub-Riemannian metric $d$, defined by the distribution $D$ and the scalar product $\langle\cdot,\cdot\rangle$ on $D(e),$ satisfies the system of ordinary differential equations
\begin{equation}
\label{s1}
\dot{\gamma}(t)=\gamma(t)u(t),\quad u(t)\in D(e)\subset\frak{g},\quad \langle u(t),u(t)\rangle\equiv 1,
\end{equation}
\begin{equation}
\label{s2}
pr_{\frak{g}}([u(t)^{T},u(t)]+[u(t)^{T},v(t)])=\dot{u}(t)+\dot{v}(t),
\end{equation}
where $u=u(t)$, $v=v(t)\in\frak{g}$, $(v(t),D(e))\equiv 0$, $t\in (-a,a)\subset\mathbb{R}$, are some real-analytic vector functions.
\end{theorem}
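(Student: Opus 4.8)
The plan is to obtain both relations from the general description of normal geodesics of a left-invariant sub-Riemannian metric supplied by Theorem 5 of \cite{Ber0}, specialized to a matrix Lie group $G\subset GL(n)$ and to the metric induced by the standard scalar product $(\cdot,\cdot)$ on $\mathfrak{gl}(n)$. Equation (\ref{s1}) is then just horizontality together with the arclength normalization: since $\gamma$ is a horizontal curve on $(G,d)$, its left-trivialized velocity $u(t):=\gamma(t)^{-1}\dot\gamma(t)$ lies in $D(e)$, whence $\dot\gamma=\gamma u$, and parametrization by arclength forces $\langle u,u\rangle\equiv 1$.

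The substance of the statement is (\ref{s2}), which I would derive from the Hamiltonian (Euler--Arnold / Lie--Poisson) form of the normal equations. By Theorem 5 of \cite{Ber0} a normal geodesic is the projection to $G$ of a trajectory of the Hamiltonian flow on $T^{*}G$; after left trivialization this yields a curve $\psi(t)\in\mathfrak{g}^{*}$ obeying $\dot\psi=\operatorname{ad}_{u}^{*}\psi$, where $u\in D(e)$ is the velocity determined by the quadratic normal Hamiltonian $\tfrac12\langle\cdot,\cdot\rangle$. Identifying $\mathfrak{g}^{*}$ with $\mathfrak{g}$ by means of the (restricted) standard scalar product and writing $\psi=u+v$ with $u\in D(e)$ and $v$ in the orthogonal complement of $D(e)$ (so that $(v,D(e))\equiv 0$) splits $\psi$ into its horizontal and annihilator parts; this is exactly the decomposition appearing in (\ref{s2}), and $\dot\psi=\dot u+\dot v$.

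The key computational step, and the one where the transpose and the projection $pr_{\mathfrak{g}}$ enter, is to express $\operatorname{ad}_{u}^{*}$ through the standard product. Using $(x,y)=\Trace(x^{T}y)$ and cyclicity of the trace one verifies, for $a,b\in\mathfrak{g}$ and $\psi\in\mathfrak{g}$,
\begin{equation*}
(\psi,[a,b])=([a^{T},\psi],b),
\end{equation*}
so that on $\mathfrak{g}$ the coadjoint operator is $\operatorname{ad}_{a}^{*}\psi=pr_{\mathfrak{g}}[a^{T},\psi]$, the projection being required because $a^{T}$ need not lie in $\mathfrak{g}$ and hence $[a^{T},\psi]$ may leave $\mathfrak{g}$. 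Substituting $a=u$ and $\psi=u+v$ gives $\dot u+\dot v=pr_{\mathfrak{g}}([u^{T},u]+[u^{T},v])$, which is precisely (\ref{s2}).

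The remaining points are routine: the proportionality constant between $\langle\cdot,\cdot\rangle$ and the restriction of $(\cdot,\cdot)$ to $D(e)$ only rescales the speed and is absorbed in the arclength normalization, while real-analyticity of $u$ and $v$ follows from analyticity of the Hamiltonian vector field. The main obstacle I anticipate is the bookkeeping around the identification $\mathfrak{g}^{*}\cong\mathfrak{g}$: one must check that the transpose --- an operation on $\mathfrak{gl}(n)$ that need not preserve $\mathfrak{g}$ --- combines with $pr_{\mathfrak{g}}$ to reproduce exactly the intrinsic coadjoint action, and that the sign conventions in the Lie--Poisson equation agree with those of \cite{Ber0}.
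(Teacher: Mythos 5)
Your derivation is correct and is essentially the same route as the paper's, which in fact offers no proof at all beyond the single sentence that Theorem 5 of \cite{Ber0} implies the statement: your reconstruction --- horizontality plus the arclength normalization for (\ref{s1}), and the left-trivialized Lie--Poisson equation $\dot\psi=\operatorname{ad}^{*}_{u}\psi$ combined with the trace identity $(\psi,[a,b])=([a^{T},\psi],b)$ and the orthogonal splitting $\psi=u+v$ to get (\ref{s2}) --- is exactly the content that citation encapsulates, and your observation that the proportionality constant is absorbed into $v$ is also right. The only loose end, which you correctly flag yourself, is matching the sign convention of the Lie--Poisson equation to that of \cite{Ber0}.
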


It follows from equations (\ref{s1}), (\ref{s2}) that

\begin{corollary}
\label{cor}
Every geodesic, parametrized by arclength, in $(G,d)$ is a part of unique geodesic $\gamma=\gamma(t),$ $t\in \mathbb{R},$
parametrized by arclength, in $(G,d).$
\end{corollary}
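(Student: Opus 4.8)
The plan is to regard (\ref{s1})--(\ref{s2}) as an autonomous first-order system in normal form for the triple $(\gamma,u,v)$ and to invoke the standard theory of analytic ordinary differential equations. First I would fix the orthogonal decomposition $\mathfrak{g}=D(e)\oplus W$, where $W:=\{w\in\mathfrak{g}:(w,D(e))=0\}$, so that $u(t)\in D(e)$ and $v(t)\in W$; since these are fixed subspaces, $\dot u(t)\in D(e)$ and $\dot v(t)\in W$. Projecting the right-hand side of (\ref{s2}), which lies in $\mathfrak{g}$, onto the two summands (and absorbing $pr_{\mathfrak{g}}$, since $D(e),W\subseteq\mathfrak{g}$) then splits it as
\begin{equation*}
\dot u=pr_{D(e)}([u^{T},u]+[u^{T},v]),\qquad \dot v=pr_{W}([u^{T},u]+[u^{T},v]),
\end{equation*}
which together with $\dot\gamma=\gamma u$ from (\ref{s1}) is a system resolved for the derivatives whose right-hand sides are polynomial, hence real-analytic, in the entries of $\gamma,u,v$. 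By the Cauchy--Picard theorem it has through each initial value $(\gamma_{0},u_{0},v_{0})\in G\times D(e)\times W$ a unique maximal solution, which is moreover real-analytic in $t$.

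The heart of the argument is to show that each maximal solution is in fact defined on all of $\mathbb{R}$, i.e.\ that it cannot escape to infinity in finite time. Here I would first observe that $(u,u)$ is a first integral: writing the standard scalar product as $(A,B)=\Trace(AB^{T})$ and using the cyclic invariance of the trace one gets $(u,[u^{T},u])=0$ and $(u,[u^{T},v])=0$, so that $\frac{d}{dt}(u,u)=2(u,\dot u)=0$. Thus $u(t)$ stays on the compact sphere $\langle u,u\rangle\equiv1$ and is bounded by some constant $M$ along the whole solution. Then $v$ satisfies $\dot v=pr_{W}[u^{T},u]+pr_{W}[u^{T},v]$, which is \emph{linear} in $v$ with inhomogeneous term and coefficient operator bounded in terms of $M$, so a Gronwall estimate rules out blow-up of $v$ in finite time. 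Finally, with $u$ bounded, $\dot\gamma=\gamma u$ is a linear matrix equation and has no finite-time singularity; since $\gamma u\in T_{\gamma}G$ its solution remains in $G$. Hence every maximal solution lives on all of $\mathbb{R}$.

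To finish, given an arclength geodesic $\gamma\colon(-a,a)\to G$, Theorem \ref{theor0} presents it as the $\gamma$-component of a solution $(\gamma,u,v)$; extending that solution to $\mathbb{R}$ yields a curve defined for all $t$ that restricts to the given arc, and since the solutions of this system are exactly the normal geodesics (which are locally shortest arcs) the extension is itself a geodesic parametrized by arclength. Uniqueness I would then get from analyticity alone: any two such extensions are real-analytic maps $\mathbb{R}\to\mathbb{R}^{n^{2}}$ that agree on the open interval $(-a,a)$, hence coincide on all of $\mathbb{R}$ by the identity theorem for real-analytic functions.

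The step I expect to be the main obstacle is the global existence, specifically the control of the auxiliary variable $v$: a priori the coupling of $u$ and $v$ in (\ref{s2}) might force a finite-time singularity, and what I expect to save the argument are the two structural facts that $u$ is trapped on a compact sphere by the first integral $(u,u)$ and that the equation for $v$ is only linear in $v$, so that a Gronwall-type estimate closes the bound.
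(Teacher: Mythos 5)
Your proposal is correct and is essentially the argument the paper intends: the paper gives no proof at all, merely asserting that the corollary ``follows from equations (\ref{s1}), (\ref{s2})'', and your splitting of the system into normal form, the first integral $(u,u)$, the Gronwall bound on $v$, and the linearity of $\dot\gamma=\gamma u$ supply exactly the global-existence details that assertion presupposes. The concluding uniqueness step via real-analyticity of geodesics (guaranteed by Theorem \ref{theor0}) is also sound.
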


\section{The search of geodesics in $(SL(2),\delta)$}

We are interested in the Lie group $SL(2)$. In consequence of (\ref{sl}), matrices
\begin{equation}
\label{ppk}
p_1=\frac{1}{2}(e_{11}-e_{22}),\,\,p_2=\frac{1}{2}(e_{12}+e_{21}),\,\,k=\frac{1}{2}(e_{21}-e_{12})
\end{equation}
constitute a basis of the Lie algebra $\frak{sl}(2)$.

\begin{theorem}
\label{theor1}
Let be given the basis (\ref{ppk}) of the Lie algebra  $\frak{sl}(2)$, $D(e)=\Lin(p_1,p_2),$ and scalar product $\langle\cdot,\cdot\rangle$ on $D(e)$
with orthonormal basis $p_1$, $p_2$. Then left-invariant distribution $D$ on the Lie group $SL(2)$ with given $D(e)$ is totally nonholonomic and the pair
$(D(e),\langle\cdot,\cdot\rangle )$ defines left-invariant sub-Riemannian metric $\delta$ on $SL(2)$. Moreover, any  geodesic $\gamma=\gamma(t),$ $t\in \mathbb{R},$ parametrized by arclength, in $(SL(2), \delta)$ with condition 
$\gamma(0)=e$ is a product of two 1-parameter subgroups:
\begin{equation}
\label{sol}
\gamma(t)=\gamma(\beta,\phi; t)=\exp(t(\cos\phi p_1 + \sin \phi p_2 +\beta k)) \exp(-t\beta k),
\end{equation}
where $\phi,$ $\beta$ are some arbitrary constants.
\end{theorem}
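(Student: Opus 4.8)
The plan is to handle the three assertions in turn, front-loading the algebra in $\mathfrak{sl}(2)$ so that the differential equations (\ref{s1})--(\ref{s2}) become transparent. First I would record the multiplication table of the basis (\ref{ppk}): a direct computation gives $[p_1,p_2]=-k$, $[p_2,k]=p_1$, $[k,p_1]=p_2$, together with the transpose relations $p_1^T=p_1$, $p_2^T=p_2$, $k^T=-k$. Since $[p_1,p_2]=-k\notin D(e)$ while $p_1,p_2,[p_1,p_2]$ span $\mathfrak{sl}(2)$, the distribution $D$ is bracket-generating at the second step, hence totally nonholonomic; by the Rashevskii--Chow theorem $SL(2)$ is horizontally connected and $\delta$ is a genuine (finite) left-invariant metric. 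I would also verify that $p_1,p_2,k$ are mutually orthogonal of equal length with respect to the standard scalar product $(\cdot,\cdot)$, so that $\langle\cdot,\cdot\rangle$ on $D(e)$ is proportional to the restriction of $(\cdot,\cdot)$; this is precisely the hypothesis needed to invoke Theorem \ref{theor0}, which then guarantees that every geodesic through $e$ satisfies (\ref{s1})--(\ref{s2}).

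The core step is to integrate this system. Writing the unit vector as $u(t)=\cos\phi(t)\,p_1+\sin\phi(t)\,p_2$ and noting that the orthogonal complement of $D(e)$ inside $\mathfrak{sl}(2)$ is $\Lin(k)$, I would set $v(t)=w(t)k$. The key simplification is that $u^T=u$, so the term $[u^T,u]$ in (\ref{s2}) vanishes identically; moreover every bracket involved already lies in $\mathfrak{sl}(2)$, so $pr_{\frak{g}}$ acts trivially. Expanding $[u^T,v]=[u,wk]=w(\sin\phi\,p_1-\cos\phi\,p_2)$ and comparing the $p_1$, $p_2$, $k$ components of (\ref{s2}) against $\dot u+\dot v$ yields $\dot w=0$ and $\dot\phi=-w$. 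Hence $w$ is a constant, which I would rename $-\beta$, and $\phi(t)=\phi(0)+\beta t$ is affine: the velocity $u(t)$ rotates uniformly in $D(e)$ with angular speed $\beta$, while $v(t)\equiv-\beta k$.

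It then remains to recover $\gamma$ from $\dot\gamma=\gamma u(t)$ with $\gamma(0)=e$. Because $u(t)$ is time-dependent, this cannot be integrated by a single exponential, and this non-autonomous integration will be the main obstacle. The device is to observe that, with $a:=\cos\phi\,p_1+\sin\phi\,p_2$ and $\Omega:=a+\beta k$, the candidate (\ref{sol}) satisfies $\gamma^{-1}\dot\gamma=\Ad(\exp(t\beta k))\,\Omega-\beta k$; since $k$ commutes with $\exp(t\beta k)$ this collapses to $\Ad(\exp(t\beta k))\,a$. A short computation of $\add_k$ on $D(e)$ gives $\add_k p_1=p_2$, $\add_k p_2=-p_1$, so $\Ad(\exp(t\beta k))$ acts on $D(e)$ as rotation by the angle $t\beta$; thus $\Ad(\exp(t\beta k))\,a=\cos(\phi+\beta t)\,p_1+\sin(\phi+\beta t)\,p_2$, which is exactly the uniformly rotating unit velocity found above. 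Hence (\ref{sol}) solves (\ref{s1})--(\ref{s2}) for each pair $(\phi,\beta)$, and since these pairs parametrize all admissible initial data $(u(0),v(0))$, uniqueness of solutions of the linear system (equivalently Corollary \ref{cor}) shows that every geodesic through $e$ has this form.
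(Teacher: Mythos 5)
Your proposal follows essentially the same route as the paper: the same bracket table, the same reduction of (\ref{s2}) to $\dot\beta=0$, $\dot\phi=\beta$ via the ansatz $u=\cos\phi\,p_1+\sin\phi\,p_2$, $v\in\Lin(k)$, and the same verification that (\ref{sol}) satisfies $\gamma^{-1}\dot\gamma=\Ad(\exp(t\beta k))(\cos\phi\,p_1+\sin\phi\,p_2)$. The only stylistic difference is that you compute the conjugation $\exp(t\beta k)(\cos\phi\,p_1+\sin\phi\,p_2)\exp(-t\beta k)$ abstractly, as the rotation $\exp(t\beta\,\add k)$ of the plane $D(e)$, where the paper multiplies the explicit $2\times 2$ matrices; your version is slightly cleaner and all of its computations ($[u^T,u]=0$ since $u^T=u$, $[u,k]=\sin\phi\,p_1-\cos\phi\,p_2$, the trivial action of $pr_{\frak{g}}$) are correct.

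There is, however, one step you pass over. Theorem \ref{theor0} asserts that every \emph{normal} geodesic satisfies (\ref{s1})--(\ref{s2}); it does not by itself guarantee that every geodesic of $(SL(2),\delta)$ is normal, so your sentence ``Theorem \ref{theor0} \dots guarantees that every geodesic through $e$ satisfies (\ref{s1})--(\ref{s2})'' claims more than that theorem gives. Since Theorem \ref{theor1} is a statement about \emph{any} geodesic, you must rule out strictly abnormal minimizers. The paper does this by citing Theorem 3 of \cite{Ber0}: every geodesic of a left-invariant sub-Riemannian metric on a three-dimensional Lie group is normal (here one can also argue directly that a rank-two bracket-generating distribution on a three-manifold is contact away from a degeneracy locus, and in the present case $[p_1,p_2]=-k$ shows $D$ is contact everywhere, so there are no strictly abnormal geodesics). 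Inserting this one citation or remark closes the gap; the rest of your argument, including the final appeal to uniqueness of solutions with given initial data $(u(0),v(0))$, is sound.
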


\begin{proof}
It follows from (\ref{ppk}) that
$$[p_1,p_2]=-k,\quad [p_1,k]= -p_2,\quad [p_2,k]=p_1.$$
This implies the first statement of Theorem~\ref{theor1}.

It is clear that on $D(e)$
$$\langle\cdot,\cdot\rangle=\frac{1}{2}(\cdot,\cdot).$$
In consequence of Theorem 3 in \cite{Ber0}, every geodesic on 3-dimensional Lie group with left-invariant sub-Riemannian metric is normal. Then it follows from Theorem \ref{theor0}  that one can apply ODE (\ref{s1}), (\ref{s2})  to find geodesics   $\gamma=\gamma(t)$, $t\in\mathbb{R}$, in $(SL(2),\delta)$.

It is clear that
\begin{equation}
\label{uv}
u(t)=\cos{\phi(t)}p_1+\sin{\phi(t)}p_2,\quad v(t)=-\beta(t)k,
\end{equation}
and the identity (\ref{s2}) can be written in the form
$$[\cos{\phi(t)}p_1+\sin{\phi(t)}p_2,-\beta(t)k]=\dot{\phi}(t)(-\sin{\phi(t)}p_1+\cos{\phi(t)}p_2)-\dot{\beta}(t)k.$$
In consequence of (\ref{ppk}), the expression on the left hand side of equality is equal to
$$\beta(t)(\cos{\phi(t)}p_2-\sin{\phi(t)}p_1).$$
We get identities $\dot{\beta}(t)=0$, $\dot{\phi}(t)=\beta(t)$. Hence
\begin{equation}
\label{betaphi}
\beta=\beta(t)=\Const,\quad \phi(t)=\phi+\beta t.
\end{equation}

In view of (\ref{s1}), (\ref{uv}), and (\ref{betaphi}), it must be
\begin{equation}
\label{b}
\dot{\gamma}(t)=\gamma(t)(\cos{(\beta t+\phi)}p_1+\sin{(\beta t+\phi)}p_2).
\end{equation}
Let us prove that (\ref{sol}) is a solution of ODE (\ref{b}). Really,
$$\exp(-t\beta k)=\left(\begin{array}{cc}
\cos{\frac{\beta t}{2}} & \sin{\frac{\beta t}{2}} \\
-\sin{\frac{\beta t}{2}} & \cos{\frac{\beta t}{2}}
\end{array}\right).$$

Then
$$\dot{\gamma}(t)=\exp(t(\cos\phi p_1 + \sin \phi p_2 +\beta k))(\cos\phi p_1 + \sin \phi p_2 +\beta k)\exp(-t\beta k)+$$
$$\gamma(t)(-\beta k)=\gamma(t)\exp(t\beta k)(\cos\phi p_1 + \sin \phi p_2 +\beta k)\exp(-t\beta k)+\gamma(t)(-\beta k)=$$
$$\gamma(t)\exp(t\beta k)(\cos\phi p_1 + \sin \phi p_2)\exp(-t\beta k)+\gamma(t)(\beta k)+\gamma(t)(-\beta k)=$$
$$\gamma(t)\left(\begin{array}{cc}
\cos{\frac{\beta t}{2}} & -\sin{\frac{\beta t}{2}} \\
\sin{\frac{\beta t}{2}} & \cos{\frac{\beta t}{2}}
\end{array}\right)
\left(\begin{array}{cc}
\frac{1}{2}\cos{\phi} & \frac{1}{2}\sin{\phi} \\
\frac{1}{2}\sin{\phi} & -\frac{1}{2}\cos{\phi}
\end{array}\right)
\left(\begin{array}{cc}
\cos{\frac{\beta t}{2}} & \sin{\frac{\beta t}{2}} \\
-\sin{\frac{\beta t}{2}} & \cos{\frac{\beta t}{2}}
\end{array}\right)=$$
$$\gamma(t)(\cos{(\beta t+\phi)}p_1+\sin{(\beta t+\phi)}p_2)=\gamma(t)u(t).$$
\end{proof}

\begin{remark}
\label{rem1}
Both 1-parameter subgroups in (\ref{sol}) are nowhere tangent to the distribution $D$ for $\beta\neq 0$ so any their interval has infinite length in the metric $\delta.$
\end{remark}

\begin{remark}
\label{rem2}
To change a sign of $\beta$ in (\ref{sol}) is the same as to change a sign of $t$
and to change angle $\phi_0$ by angle $\phi_0\pm \pi.$
\end{remark}

\begin{proposition}
\label{rem3}
For any matrix $B\in SO(2)=\exp(\mathbb{R}k)$, the map $l_{B}\circ r_{B^{-1}}$, where $l_{B}$ is the multiplication on the left by $B$, $r_{B^{-1}}$ is the multiplication on the right by $B^{-1}$, is simultaneously automorphism $\ad B$ of the Lie algebra $(\frak{sl}(2),[\cdot,\cdot])$, preserving
$\langle\cdot,\cdot\rangle$, and automorphism $I(B)$ of the Lie group $SL(2)$, preserving the distribution $D$ and the metric $\delta$. In particular,
\begin{equation}
\label{ad}
\ad B(p_1+\beta k)=\cos{\phi}p_1+\sin{\phi}p_2+\beta k
\end{equation}
if
\begin{equation}
\label{bbb}
B=\exp(\phi k)=\left(\begin{array}{cc}
\cos{\frac{\phi}{2}} & -\sin{\frac{\phi}{2}} \\
\sin{\frac{\phi}{2}} & \cos{\frac{\phi}{2}}
\end{array}\right).
\end{equation}
Consequently, the metric $\delta$ on $SL(2)$ is invariant under right shifts by elements of the subgroup 
$SO(2)\subset SL(2).$
\end{proposition}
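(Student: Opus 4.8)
The plan is to recognise $I(B)\colon g\mapsto BgB^{-1}$ as the inner automorphism of $SL(2)$ given by conjugation, whose differential at the unit $e$ is precisely the Lie algebra automorphism $\ad B$. Granting the standard facts that conjugation is a group automorphism and that $\ad B=d(I(B))_e$, the whole statement reduces to two infinitesimal claims about $\ad B$: that it preserves the subspace $D(e)=\Lin(p_1,p_2)$, and that it acts orthogonally with respect to $\langle\cdot,\cdot\rangle$ on $D(e)$. I would establish these by computing $\ad B$ explicitly, and then transfer them to the global assertions about $D$ and $\delta$ by invoking left-invariance.

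First I would compute $\ad B$. Since $B=\exp(\phi k)$, one has $\ad B=\exp(\phi\,\add k)$. The bracket relations recorded in the proof of Theorem~\ref{theor1} give $\add k(p_1)=[k,p_1]=p_2$, $\add k(p_2)=[k,p_2]=-p_1$, and $\add k(k)=0$, so $\add k$ is a quarter-turn in the plane $\Lin(p_1,p_2)$ and annihilates $k$. Exponentiating, $\ad B$ is the rotation by the angle $\phi$ on $\Lin(p_1,p_2)$ and the identity on $\mathbb{R}k$; in particular $\ad B(p_1)=\cos\phi\,p_1+\sin\phi\,p_2$, $\ad B(p_2)=-\sin\phi\,p_1+\cos\phi\,p_2$, and $\ad B(k)=k$, which yields formula (\ref{ad}). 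From this it is immediate that $\ad B(D(e))=D(e)$, and, being a rotation of the orthonormal pair $p_1,p_2$, the restriction of $\ad B$ to $D(e)$ preserves $\langle\cdot,\cdot\rangle$.

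Next I would pass from the Lie algebra to the group. For the matrix group $SL(2)$ a tangent vector at $g$ in the left-invariant frame has the form $gX$ with $X\in\frak{sl}(2)$; differentiating the curve $t\mapsto g\exp(tX)$ and using $B\exp(tX)B^{-1}=\exp(t\,\ad B(X))$ gives
$$d(I(B))_g(gX)=(BgB^{-1})\,\ad B(X)=I(B)(g)\,\ad B(X).$$
Since $D$ is left-invariant with $D(g)=\{gX:X\in D(e)\}$ and $\ad B(D(e))=D(e)$, this shows $d(I(B))_g(D(g))=I(B)(g)\cdot D(e)=D(I(B)(g))$, so $I(B)$ preserves the distribution. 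Because the sub-Riemannian inner product on $D(g)$ is by definition the left translate of $\langle\cdot,\cdot\rangle$, and $\ad B$ preserves $\langle\cdot,\cdot\rangle$, the displayed formula together with left-invariance shows that $d(I(B))_g$ is a fibrewise isometry; hence $I(B)$ preserves $\delta$.

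Finally the right-invariance follows formally: writing $r_{B^{-1}}=l_{B^{-1}}\circ I(B)$, the left shift $l_{B^{-1}}$ is an isometry of $\delta$ by left-invariance and $I(B)$ is an isometry by the previous step, so the right shift $r_{B^{-1}}$ is an isometry for every $B\in SO(2)$; as $B^{-1}$ ranges over all of $SO(2)$, the metric $\delta$ is invariant under all right shifts by $SO(2)$. The only step requiring genuine care — and the one I would regard as the crux — is the transfer formula for $d(I(B))_g$, since everything else is either a direct exponentiation of $\add k$ or a formal consequence of left-invariance; once that formula is in hand, preservation of both $D$ and $\delta$ drops out at once.
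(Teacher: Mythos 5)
Your proof is correct. The paper states Proposition~\ref{rem3} without any proof, treating it as a standard fact about inner automorphisms; your argument supplies exactly the expected verification: the identity $\operatorname{Ad}(\exp(\phi k))=\exp(\phi\operatorname{ad}k)$ together with the bracket relations $[k,p_1]=p_2$, $[k,p_2]=-p_1$ gives (\ref{ad}) and the orthogonal action on $D(e)$, and the transfer formula $d(I(B))_g(gX)=I(B)(g)\operatorname{Ad}B(X)$ combined with left-invariance yields preservation of $D$ and $\delta$ and, via $r_{B^{-1}}=l_{B^{-1}}\circ I(B)$, the right $SO(2)$-invariance.
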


\begin{proposition}
\label{prop}
The space $(SL(2),\delta)$ is geodesic orbit, i.e., every geodesic in $(SL(2),\delta)$ is an orbit of  some  1--parameter isometry subgroup of the space
$(SL(2),\delta).$
\end{proposition}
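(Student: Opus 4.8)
The plan is to exhibit, for each geodesic $\gamma=\gamma(\beta,\phi;t)$ through $e$ given by (\ref{sol}), an explicit $1$-parameter group of isometries whose orbit through $e$ is exactly $\gamma$. Writing $w=\cos\phi\,p_1+\sin\phi\,p_2+\beta k$, formula (\ref{sol}) reads $\gamma(t)=\exp(tw)\exp(-t\beta k)$, a product of a left factor and a right factor. This factored form is precisely what one expects from an orbit, and it suggests combining a left translation with a right translation.

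First I would define the maps $\Phi_s:=l_{\exp(sw)}\circ r_{\exp(-s\beta k)}$, that is, $\Phi_s(g)=\exp(sw)\,g\,\exp(-s\beta k)$ for $g\in SL(2)$. Since left and right translations always commute as maps, and since $s\mapsto\exp(sw)$ and $s\mapsto\exp(-s\beta k)$ are $1$-parameter subgroups, a short computation gives $\Phi_0=\mathrm{id}$ and $\Phi_s\circ\Phi_{s'}=\Phi_{s+s'}$; hence $\{\Phi_s\}_{s\in\mathbb{R}}$ is a $1$-parameter group. Next I would check that each $\Phi_s$ is an isometry of $(SL(2),\delta)$: the left factor $l_{\exp(sw)}$ is an isometry by left-invariance of $\delta$, while the right factor $r_{\exp(-s\beta k)}$ is a right translation by $\exp(-s\beta k)=\exp((-s\beta)k)\in SO(2)=\exp(\mathbb{R}k)$, hence an isometry by Proposition~\ref{rem3}. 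A composition of isometries being an isometry, every $\Phi_s$ is an isometry.

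Evaluating at the unit gives $\Phi_s(e)=\exp(sw)\exp(-s\beta k)=\gamma(s)$, so the geodesic $\gamma$ through $e$ is exactly the orbit of $e$ under $\{\Phi_s\}$. To treat an arbitrary geodesic, which by left-invariance of $\delta$ has the form $t\mapsto g_0\gamma(t)$ for some $g_0\in SL(2)$ and some geodesic $\gamma$ through $e$, I would conjugate by the isometry $l_{g_0}$: the family $\Psi_s:=l_{g_0}\circ\Phi_s\circ l_{g_0}^{-1}$ is again a $1$-parameter group of isometries, and its orbit through $g_0$ is $\Psi_s(g_0)=g_0\gamma(s)$, as required.

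All the real content sits in the single step asserting that $\Phi_s$ is an isometry, and this rests entirely on Proposition~\ref{rem3} (right-invariance of $\delta$ under $SO(2)$), which is already available; the remaining verifications are routine algebra. There is no genuine obstacle beyond recognizing that the two exponential factors in (\ref{sol}) are, respectively, a left translation and a right translation by an element of $SO(2)$. In the language of the introduction this is transparent: the acting group is $SL(2)\times SO(2)$ operating by $(a,b)\cdot g=agb^{-1}$, and $\Phi_s$ is the image of the $1$-parameter subgroup $s\mapsto(\exp(sw),\exp(s\beta k))$, which confirms the geodesic orbit property and the description of $(SL(2),\delta)$ as the weakly symmetric space $(SL(2)\times SO(2))/SO(2)$.
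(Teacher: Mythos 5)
Your proposal is correct and follows essentially the same route as the paper: the paper's proof also reduces to a geodesic through $e$ by left-invariance and introduces exactly the maps $\Phi(s)=l_{\exp(s(\cos\phi\,p_1+\sin\phi\,p_2+\beta k))}\circ r_{\exp(-s\beta k)}$, which are isometries by the last statement of Proposition~\ref{rem3} and satisfy $\Phi(s)(\gamma(t))=\gamma(t+s)$. Your extra verifications (the group law for $\Phi_s$ and the conjugation by $l_{g_0}$ for a general geodesic) are the routine details the paper leaves implicit.
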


\begin{proof}
In consequence of the left-invariance of the metric $\delta$, it is enough to prove the statement for a geodesic given by (\ref{sol}). By the last statement of Proposition \ref{rem3}, the maps
$$\Phi(s)=l_{\exp(s(\cos\phi p_1+\sin\phi p_2+\beta k))}\circ r_{\exp(-s\beta k)},\quad s\in \mathbb{R},$$
form  1-parameter motions subgroup of $(SL(2),\delta).$  Additionally, in consequence of the matrix exponent properties,
$\Phi(s)(\gamma(t))=\gamma(t+s).$
\end{proof}

\begin{corollary}
\label{eqv}
If $\gamma(t),$ $t_0\leq t \leq t_0+T,$ $t_0\in \mathbb{R},$ $T>0,$ is a shortest arc, then for any number
$t_1\in \mathbb{R}$, $\gamma(t),$ $t_1\leq t \leq t_1+T,$ is a shortest arc.
\end{corollary}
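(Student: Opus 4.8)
The plan is to derive the statement directly from Proposition~\ref{prop}, using the elementary fact that an isometry of a sub-Riemannian space carries shortest arcs to shortest arcs. Recall that a shortest arc joining two points is, by definition, a curve whose length equals the distance $\delta$ between its endpoints; since any isometry $f$ of $(SL(2),\delta)$ preserves both the lengths of curves and the distance $\delta$, the image $f\circ\sigma$ of a shortest arc $\sigma$ is again a curve joining $f$ of its endpoints whose length equals the distance between them, hence again a shortest arc. First I would record that it suffices to treat a geodesic through $e$, that is, one of the form (\ref{sol}): by the left-invariance of $\delta$ an arbitrary geodesic $\gamma$ is the left shift $l_{\gamma(0)}$ of the geodesic $t\mapsto \gamma(0)^{-1}\gamma(t)$ starting at the unit, and since left shifts are isometries they preserve the shortest-arc property.

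Next I would invoke the $1$-parameter isometry group furnished by the proof of Proposition~\ref{prop}. For the geodesic (\ref{sol}) the maps $\Phi(s)$, $s\in\mathbb{R}$, are isometries of $(SL(2),\delta)$ satisfying $\Phi(s)(\gamma(t))=\gamma(t+s)$. Setting $s=t_1-t_0$, the isometry $\Phi(t_1-t_0)$ sends the point $\gamma(t)$ to $\gamma(t+t_1-t_0)$, so as $t$ runs over $[t_0,t_0+T]$ the image runs precisely over $[t_1,t_1+T]$; thus $\Phi(t_1-t_0)$ maps the arc $\gamma|_{[t_0,t_0+T]}$ onto the arc $\gamma|_{[t_1,t_1+T]}$. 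By the observation of the previous paragraph the latter arc is a shortest arc, which is exactly the assertion of the corollary.

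I do not expect a genuine obstacle, since the whole content has already been supplied by Proposition~\ref{prop}: the geodesic-orbit property provides an isometry that realizes the reparametrization $t\mapsto t+s$ as a motion of the space. The only points requiring a little care are the bookkeeping of the parameter intervals under the shift $s=t_1-t_0$ and the explicit (but immediate) remark that isometries preserve shortest arcs; both are routine once the geodesic-orbit property is in hand.
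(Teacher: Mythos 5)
Your proposal is correct and follows exactly the route the paper intends: the corollary is stated as an immediate consequence of Proposition~\ref{prop}, with the $1$-parameter isometry group $\Phi(s)$ satisfying $\Phi(s)(\gamma(t))=\gamma(t+s)$ transporting the arc over $[t_0,t_0+T]$ to the arc over $[t_1,t_1+T]$, and isometries (including the left shifts used to reduce to geodesics through $e$) preserving shortest arcs. Your bookkeeping of the shift $s=t_1-t_0$ and the explicit remark that isometries carry shortest arcs to shortest arcs are the only details the paper leaves tacit, and you have supplied them correctly.
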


\begin{lemma}
\label{lem}
Let $x=(x_{ij})\in\frak{sl}(2)$,
$$\Det (x):=-x_{11}^2-x_{12}x_{21},\quad \alpha:=\sqrt{\mid\Det (x)\mid}.$$
Then
\begin{equation}
\label{al1}
\exp(x)=e+x\quad\mbox{if}\quad\Det (x)=0,
\end{equation}
\begin{equation}
\label{al2}
\exp(x)=\Ch\alpha\cdot e+\frac{\Sh\alpha}{\alpha}x\quad\mbox{if}\quad\Det (x)<0,
\end{equation}
\begin{equation}
\label{al3}
\exp(x)=\cos{\alpha}\cdot e+\frac{\sin\alpha}{\alpha}x\quad\mbox{if}\quad\Det (x)>0.
\end{equation}
\end{lemma}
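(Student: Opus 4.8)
The plan is to reduce everything to a single algebraic identity for traceless $2\times 2$ matrices and then sum the exponential power series termwise. The crucial observation is that the Cayley--Hamilton theorem applied to $x\in\frak{sl}(2)$, for which $\Trace(x)=0$, gives $x^2-\Trace(x)x+\Det(x)e=0$, i.e.
$$x^2=-\Det(x)\cdot e.$$
I would verify this by hand to avoid invoking machinery: writing $x=(x_{ij})$ with $x_{22}=-x_{11}$ and squaring, the off-diagonal entries of $x^2$ vanish and both diagonal entries equal $x_{11}^2+x_{12}x_{21}=-\Det(x)$, which confirms $x^2=-\Det(x)\cdot e$ directly from the definition $\Det(x)=-x_{11}^2-x_{12}x_{21}$.

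With this identity in hand, the three formulas follow by separating $\exp(x)=\sum_{n\geq 0}x^n/n!$ into its even and odd parts. Because $x^2$ is a scalar multiple of $e$, induction gives $x^{2k}=(-\Det(x))^k e$ and $x^{2k+1}=(-\Det(x))^k x$ for every $k\geq 0$, so the whole series collapses to an expression of the form $f(\alpha)e+g(\alpha)x$, where the two scalar series depend only on $\alpha=\sqrt{\mid\Det(x)\mid}$. When $\Det(x)=0$ one has $x^2=0$, the series truncates after two terms, and $\exp(x)=e+x$, which is (\ref{al1}). When $\Det(x)<0$ we have $-\Det(x)=\alpha^2>0$, so the even part sums to $\sum_k\alpha^{2k}/(2k)!=\Ch\alpha$ and the odd part to $\frac{1}{\alpha}\sum_k\alpha^{2k+1}/(2k+1)!=\Sh\alpha/\alpha$, giving (\ref{al2}). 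When $\Det(x)>0$ we have $-\Det(x)=-\alpha^2$, the alternating signs $(-1)^k$ appear, and the two series become $\cos\alpha$ and $\sin\alpha/\alpha$, giving (\ref{al3}).

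I do not expect a genuine obstacle here: once the identity $x^2=-\Det(x)\cdot e$ is recorded, the rest is a routine rearrangement of a convergent power series, legitimate because the matrix exponential series converges absolutely in any submultiplicative matrix norm, so regrouping into even and odd terms is valid. The only point worth stating carefully is the bookkeeping of signs in the three cases, which is exactly what forces the appearance of $\Ch,\Sh$ for $\Det(x)<0$ versus $\cos,\sin$ for $\Det(x)>0$; this is the whole content of the lemma and the sole place where attention is needed.
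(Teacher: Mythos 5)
Your proposal is correct and follows essentially the same route as the paper: both obtain $x^2=-\Det(x)\cdot e$ from the Cayley--Hamilton theorem (the paper cites Gantmacher rather than verifying by hand) and then split the exponential series into even and odd parts, summing to $\Ch,\Sh$ or $\cos,\sin$ according to the sign of $\Det(x)$.
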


\begin{proof}
Characteristic polynomial of the matrix $x$ is equal to
$$P(\lambda)=\mid x-\lambda e\mid=\left |\begin{array}{cc}
x_{11}-\lambda & x_{12} \\
x_{21} & -x_{11}-\lambda
\end{array}\right |=\lambda^2+\Det (x).$$

By the Hamilton--Cayley theorem (see p.~93 in \cite{Gant}),
the matrix $x$ is a root of the polynomial $P(\lambda),$ i.e.,  $x^2=-(\Det (x))e$.
This implies (\ref{al1}) and
$$x^{2n+1}=\alpha^{2n}x,\,\,x^{2n}=\alpha^{2n}e,\quad\mbox{if}\quad \Det (x)<0,\,\,n\geq 1,$$
$$x^{2n+1}=(-1)^n\alpha^{2n}x,\,\,x^{2n}=(-1)^n\alpha^{2n}e,\quad\mbox{if}\quad \Det (x)>0,\,\,n\geq 1.$$
Therefore for $\Det (x)<0$,
$$\exp(x)=e+\sum\limits_{n=1}^{\infty}\frac{x^n}{n!}=e\sum\limits_{n=0}^{\infty}\frac{\alpha^{2n}}{(2n)!}+
\frac{x}{\alpha}\sum\limits_{n=0}^{\infty}\frac{\alpha^{2n+1}}{(2n+1)!},$$
and (\ref{al2}) is fulfilled. Analogously, for $\Det (x)>0$,
$$\exp(x)=e+\sum\limits_{n=1}^{\infty}\frac{x^n}{n!}=e\sum\limits_{n=0}^{\infty}\frac{(-1)^n\alpha^{2n}}{(2n)!}+
\frac{x}{\alpha}\sum\limits_{n=0}^{\infty}\frac{(-1)^n\alpha^{2n+1}}{(2n+1)!},$$
and (\ref{al3}) is true.
\end{proof}

\begin{theorem}
\label{th}
Put
\begin{equation}
\label{mn1}
m=\frac{t}{2},\quad n=1,\quad\text{if}\quad\beta^2=1,
\end{equation}
\begin{equation}
\label{mn2}
m=\frac{\Sh\frac{t\sqrt{1-\beta^2}}{2}}{\sqrt{1-\beta^2}},\quad
n=\Ch\frac{t\sqrt{1-\beta^2}}{2},\quad\text{if}\quad\beta^2<1,
\end{equation}
\begin{equation}
\label{mn3}
m=\frac{\sin\frac{t\sqrt{\beta^2-1}}{2}}{\sqrt{\beta^2-1}},\quad
n=\cos\frac{t\sqrt{\beta^2-1}}{2},\quad\text{if}\quad\beta^2>1.
\end{equation}
Then the geodesic  $\gamma(t)=\gamma(\beta,\phi;t)$ of left-invariant sub-Riemannian metric  $\delta$ on $SL(2)$
(see \,theorem~\ref{theor1}) is equal to
\begin{equation}
\label{geod}
\left(\begin{array}{cc}
n\cos\frac{\beta t}{2}+m\left(\cos{\left(\frac{\beta t}{2}+\phi\right)}+\beta\sin\frac{\beta t}{2}\right) & n\sin\frac{\beta t}{2}+m\left(\sin{\left(\frac{\beta t}{2}+\phi\right)}-\beta\cos\frac{\beta t}{2}\right) \\
-n\sin\frac{\beta t}{2}+m\left(\sin{\left(\frac{\beta t}{2}+\phi\right)}+\beta\cos\frac{\beta t}{2}\right) & n\cos\frac{\beta t}{2}+m\left(-\cos{\left(\frac{\beta t}{2}+\phi\right)}+\beta\sin\frac{\beta t}{2}\right)
\end{array}\right).
\end{equation}
\end{theorem}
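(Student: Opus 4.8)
The plan is to evaluate the product (\ref{sol}) directly: compute each of the two matrix exponentials as an explicit $2\times2$ matrix and then multiply. The whole argument reduces to three routine steps, the only subtle point being a sign bookkeeping in the middle one.

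First I would write $X := \cos\phi\,p_1 + \sin\phi\,p_2 + \beta k$ in the basis (\ref{ppk}), obtaining
$$X = \tfrac12\left(\begin{array}{cc} \cos\phi & \sin\phi-\beta \\ \sin\phi+\beta & -\cos\phi \end{array}\right),$$
and apply the determinant function of Lemma~\ref{lem} to $x=tX$. A short computation gives $\Det(tX) = -\tfrac{t^2}{4}(1-\beta^2)$, so the sign of $\Det(tX)$ is governed exactly by the trichotomy $\beta^2=1$, $\beta^2<1$, $\beta^2>1$ of (\ref{mn1})--(\ref{mn3}), and $\alpha = \sqrt{|\Det(tX)|} = \tfrac{|t|}{2}\sqrt{|1-\beta^2|}$.

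Second, I would feed $x=tX$ into the three formulas (\ref{al1})--(\ref{al3}) and check that, with the definitions of $m$ and $n$ in (\ref{mn1})--(\ref{mn3}), all three cases collapse to the single expression
$$\exp(tX) = n\,e + m\left(\begin{array}{cc} \cos\phi & \sin\phi-\beta \\ \sin\phi+\beta & -\cos\phi \end{array}\right).$$
Since $\Ch$ and $\cos$ are even, $\Ch\alpha$ and $\cos\alpha$ equal the stated $n$ at once; for $m$ one checks that the coefficient $\tfrac{\Sh\alpha}{\alpha}\,t$ (resp.\ $\tfrac{\sin\alpha}{\alpha}\,t$) multiplying $X$ equals exactly $2m$, and the degenerate case $\beta^2=1$ is simply $\exp(tX)=e+tX$ with $m=\tfrac t2$, $n=1$. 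Third, using the matrix of $\exp(-t\beta k)$ already displayed in the proof of Theorem~\ref{theor1}, I would multiply $\exp(tX)\cdot\exp(-t\beta k)$ and reduce the four entries with the addition formulas for $\cos(\tfrac{\beta t}{2}+\phi)$ and $\sin(\tfrac{\beta t}{2}+\phi)$, landing on (\ref{geod}).

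I expect the one genuinely delicate point to be the sign bookkeeping in the second step: because $\alpha$ is built from $|t|$ and $|1-\beta^2|$ whereas $m,n$ in (\ref{mn2})--(\ref{mn3}) are written in terms of the signed $t$, one must verify that the parity of $\Sh,\sin$ (odd) against $\Ch,\cos$ (even) makes the identity $\tfrac{\Sh\alpha}{\alpha}t = 2m$ hold for all real $t$ and not merely for $t\ge 0$ — the two sign reversals, one from $|t|$ inside $\alpha$ and one from the ratio $t/|t|$, cancelling through oddness. Once that uniform form of $\exp(tX)$ is secured, the remainder is a direct $2\times2$ matrix multiplication and elementary trigonometry.
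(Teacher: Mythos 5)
Your proposal is correct and follows essentially the same route as the paper: both evaluate the product (\ref{sol}) by computing the matrix exponentials explicitly via Lemma~\ref{lem} and then multiplying, with the same trichotomy on the sign of $\Det$ producing (\ref{mn1})--(\ref{mn3}). The only difference is that the paper first reduces to $\phi=0$ by conjugating with $B=\exp(\phi k)$ (Proposition~\ref{rem3}), which commutes with $\exp(-t\beta k)$, whereas you exponentiate $t(\cos\phi\,p_1+\sin\phi\,p_2+\beta k)$ directly; your computation $\Det(tX)=-\tfrac{t^2}{4}(1-\beta^2)$ and the parity bookkeeping for $t<0$ are both sound, so this is a cosmetic rather than a substantive divergence.
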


\begin{proof}
Let $\phi=0$. Then (\ref{sol}) takes the form
$$\gamma(t)\mid_{\phi=0}=\exp(t(p_1+\beta k))\exp(-t\beta k).$$
Using (\ref{ppk}) and Lemma \ref{lem}, we get
$$\exp(t(p_1+\beta k))=\exp\left(\frac{t}{2}\left(\begin{array}{cc}
 1 & -\beta \\
\beta & -1
\end{array}\right)\right)=$$
$$n\left(\begin{array}{cc}
 1 & 0 \\
0 & 1
\end{array}\right)+m\left(\begin{array}{cc}
 1 & -\beta \\
\beta & -1
\end{array}\right)=
\left(\begin{array}{cc}
 n+m & -\beta m \\
\beta m & n-m
\end{array}\right).$$
By (\ref{bbb}), matrices $B=\exp(\phi k)$ and $\exp(t\beta k)$ commute.
It follows from this, (\ref{sol}), and Proposition \ref{rem3} that
 $$\gamma(t)=B\cdot\gamma(t)\mid_{\phi=0}\cdot B^{-1}=B\exp(t(p_1+\beta k))B^{-1}\exp(-t\beta k)=$$
$$\left(\begin{array}{cc}
\cos{\frac{\phi}{2}} & -\sin{\frac{\phi}{2}} \\
\sin{\frac{\phi}{2}} & \cos{\frac{\phi}{2}}
\end{array}\right)\left(\begin{array}{cc}
 n+m & -\beta m \\
\beta m & n-m
\end{array}\right)
\left(\begin{array}{cc}
\cos{\left(\frac{\beta t}{2}+\frac{\phi}{2}\right)} & \sin{\left(\frac{\beta t}{2}+\frac{\phi}{2}\right)} \\
-\sin{\left(\frac{\beta t}{2}+\frac{\phi}{2}\right)} & \cos{\left(\frac{\beta t}{2}+\frac{\phi}{2}\right)}
\end{array}\right).$$
Calculation of the product of last two matrices finishes the proof of Theorem~\ref{th}.
\end{proof}

\begin{corollary}
If $\phi=0$ then in the notation (\ref{mn1}), (\ref{mn2}), and (\ref{mn3}),
\begin{equation}
\label{gamma0}
\gamma(t)=\left(\begin{array}{cc}
n\cos\frac{\beta t}{2}+m\left(\cos{\frac{\beta t}{2}}+\beta\sin\frac{\beta t}{2}\right) & n\sin\frac{\beta t}{2}+
m\left(\sin{\frac{\beta t}{2}}-\beta\cos\frac{\beta t}{2}\right) \\
-n\sin\frac{\beta t}{2}+m\left(\sin{\frac{\beta t}{2}}+\beta\cos\frac{\beta t}{2}\right) & n\cos\frac{\beta t}{2}+
m\left(-\cos{\frac{\beta t}{2}}+\beta\sin\frac{\beta t}{2}\right)
\end{array}\right).
\end{equation}
\end{corollary}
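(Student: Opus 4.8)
The plan is to read off (\ref{gamma0}) as the specialization $\phi=0$ of formula (\ref{geod}) in Theorem~\ref{th}, which already records all four entries of $\gamma(\beta,\phi;t)$ for arbitrary $\phi$. In each entry the angle $\phi$ occurs only inside a single factor $\cos\left(\frac{\beta t}{2}+\phi\right)$ or $\sin\left(\frac{\beta t}{2}+\phi\right)$, so setting $\phi=0$ simply replaces those by $\cos\frac{\beta t}{2}$ and $\sin\frac{\beta t}{2}$, while the terms carrying $n$ and the terms $\beta\sin\frac{\beta t}{2}$, $\beta\cos\frac{\beta t}{2}$ are left untouched. This is precisely the matrix displayed in (\ref{gamma0}), so nothing beyond substitution is needed.

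For an independent confirmation I would also derive (\ref{gamma0}) from scratch using the intermediate computation in the proof of Theorem~\ref{th}. When $\phi=0$ the conjugating matrix $B=\exp(\phi k)$ of (\ref{bbb}) degenerates to the identity, whence (\ref{sol}) collapses to
$$\gamma(t)\mid_{\phi=0}=\exp(t(p_1+\beta k))\exp(-t\beta k)=\left(\begin{array}{cc} n+m & -\beta m \\ \beta m & n-m \end{array}\right)\exp(-t\beta k),$$
where the explicit matrix for $\exp(t(p_1+\beta k))$ is the one computed in that proof and the rotation $\exp(-t\beta k)$ is the one recorded in the proof of Theorem~\ref{theor1}. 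Carrying out this single $2\times 2$ product and grouping the results by the coefficients $n$ and $m$ again yields the four entries of (\ref{gamma0}).

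I expect no genuine obstacle here, since the statement is a purely computational corollary. The only point demanding a little care is the sign bookkeeping in the two off-diagonal entries, where the contribution $\pm\beta m$ from the first factor mixes with the rotation through $\frac{\beta t}{2}$; but this is exactly the grouping already performed in Theorem~\ref{th}, so it introduces nothing new.
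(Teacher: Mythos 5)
Your proof is correct and matches the paper's (implicit) argument: the corollary is exactly the specialization $\phi=0$ of formula (\ref{geod}), and your cross-check via the product $\exp(t(p_1+\beta k))\exp(-t\beta k)$ is just the intermediate step already carried out in the proof of Theorem~\ref{th}. Nothing further is needed.
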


\section{The set of symmetric matrices in $SL(2)$}

The following proposition is proved by direct calculations.

\begin{proposition}
\label{trm}
The numbers $\trace(c)=c_{11}+c_{22}$,
\begin{equation}
\label{mc}
m(c):=\frac{\sqrt{(c_{11}-c_{22})^2+(c_{12}+c_{21})^2}}{2}
\end{equation}
and the symmetry property for $(2\times 2)$--matrix $c=(c_{ij})$ are invariant relative to the conjugation by matrices of the subgroup $SO(2).$
If $c_{12}=c_{21}$ then
\begin{equation}
\label{mdet}
\left(\frac{\trace(c)}{2}\right)^2= \det (c)+ (m(c))^2.
\end{equation}
\end{proposition}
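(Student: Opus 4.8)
The plan is to reduce all three assertions to a single clean decomposition of an arbitrary $(2\times 2)$--matrix along the basis $e,p_1,p_2,k$, and then read off how conjugation by $SO(2)$ acts on each component. First I would write any $c=(c_{ij})$ as
$$c=\frac{\trace(c)}{2}\,e+(c_{11}-c_{22})\,p_1+(c_{12}+c_{21})\,p_2+(c_{21}-c_{12})\,k,$$
which one verifies by comparing entries using (\ref{ppk}). The virtue of this splitting is that conjugation $c\mapsto BcB^{-1}$ by $B\in SO(2)=\exp(\mathbb{R}k)$ respects it.

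Next, for $B=\exp(\phi k)$ I would invoke Proposition~\ref{rem3}. The map $c\mapsto BcB^{-1}$ fixes $e$ (any conjugation preserves the identity) and fixes $k$ (since $B$ lies in the one--parameter group generated by $k$, the two commute), while by (\ref{ad}) it acts on the pair $(p_1,p_2)$ as the rotation $\ad B(p_1)=\cos\phi\,p_1+\sin\phi\,p_2$, and consequently $\ad B(p_2)=-\sin\phi\,p_1+\cos\phi\,p_2$. Therefore, after conjugation the $e$--coefficient $\trace(c)/2$ and the $k$--coefficient $(c_{21}-c_{12})$ are unchanged; this yields at once the invariance of $\trace(c)$ and of $c_{12}-c_{21}$, so the symmetry condition $c_{12}=c_{21}$ (vanishing of that last coefficient) is preserved. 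Meanwhile the vector $\bigl((c_{11}-c_{22}),\,(c_{12}+c_{21})\bigr)$ lying in the $(p_1,p_2)$--plane is merely rotated, hence its Euclidean length $\sqrt{(c_{11}-c_{22})^2+(c_{12}+c_{21})^2}=2\,m(c)$ is preserved. This settles the first two assertions.

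For the identity (\ref{mdet}) I would specialize to $c_{12}=c_{21}$ and compute directly: there $\det(c)=c_{11}c_{22}-c_{12}^2$ and $(m(c))^2=\bigl((c_{11}-c_{22})^2+4c_{12}^2\bigr)/4$, whence
$$\det(c)+(m(c))^2=c_{11}c_{22}+\frac{(c_{11}-c_{22})^2}{4}=\frac{(c_{11}+c_{22})^2}{4}=\left(\frac{\trace(c)}{2}\right)^2,$$
which is pure algebra. The only mildly delicate point is the invariance of $m(c)$; the decomposition reduces it to the rotation--invariance of a Euclidean norm and so bypasses the messy direct conjugation calculation. Everything else is routine verification.
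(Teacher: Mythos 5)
Your proof is correct. The paper gives no argument beyond the remark that the proposition is ``proved by direct calculations,'' and your version --- decomposing $c$ along $e,p_1,p_2,k$ so that the invariance of $\trace(c)$, of $c_{12}-c_{21}$, and of $m(c)$ becomes the statement that conjugation by $\exp(\phi k)$ fixes $e$ and $k$ and rotates the $(p_1,p_2)$--plane (using Proposition \ref{rem3} and (\ref{ad}), which precede this proposition in the text, so there is no circularity; the image of $p_2$ does need the one extra line $\ad B(p_2)=-\ad B([p_1,k])=-[\ad B(p_1),k]=-\sin\phi\,p_1+\cos\phi\,p_2$, or a direct two-by-two multiplication) --- is just a tidier organization of that same direct computation, with (\ref{mdet}) being the identical elementary algebra in either case.
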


Obviously,

\begin{proposition}
\label{so2}
$c\in SO(2)$ if and only if $c\in SL(2)$ and $m(c)=0.$
\end{proposition}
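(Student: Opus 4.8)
The plan is to translate both sides of the equivalence into explicit conditions on the entries of $c=(c_{ij})$ and then compare them. From the definition (\ref{mc}) of $m(c)$, the equation $m(c)=0$ holds precisely when $(c_{11}-c_{22})^2+(c_{12}+c_{21})^2=0$, that is, when $c_{11}=c_{22}$ and $c_{21}=-c_{12}$. Thus a matrix with $m(c)=0$ is determined by two parameters $a:=c_{11}=c_{22}$ and $b:=c_{12}=-c_{21}$. On the other side, by Proposition~\ref{rem3} the subgroup $SO(2)=\exp(\mathbb{R}k)$ consists exactly of the matrices (\ref{bbb}), i.e. of the rotations with $c_{11}=c_{22}=\cos\frac{\phi}{2}$ and $c_{12}=-c_{21}=-\sin\frac{\phi}{2}$.

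For the forward implication I would take $c\in SO(2)$ of the form (\ref{bbb}) and simply read off $c_{11}-c_{22}=0$ and $c_{12}+c_{21}=0$, so that $m(c)=0$ by (\ref{mc}); moreover $\Det(c)=\cos^2\frac{\phi}{2}+\sin^2\frac{\phi}{2}=1$, so that $c\in SL(2)$.

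For the converse I would begin with $c\in SL(2)$ satisfying $m(c)=0$, use the entrywise description above to write the four entries in terms of $a$ and $b$, and compute $\Det(c)=a^2+b^2$. The hypothesis $c\in SL(2)$ then forces $a^2+b^2=1$, so that $(a,-b)=(\cos\frac{\phi}{2},\sin\frac{\phi}{2})$ for a suitable $\phi$, which exhibits $c$ in the form (\ref{bbb}), i.e. $c\in SO(2)$.

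There is no serious obstacle in this argument; the one point worth stating carefully is that it is the combination of $m(c)=0$ with membership in $SL(2)$ that yields the Euclidean normalization $a^2+b^2=1$ and hence a genuine rotation. Indeed, dropping the $SL(2)$ hypothesis would admit the whole family of scalar multiples of rotations, for which $m(c)=0$ still holds; this is why both assumptions are essential in the converse direction.
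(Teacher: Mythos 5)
Your argument is correct and is exactly the direct entrywise computation that the paper has in mind: it states this proposition with the single word ``Obviously'' and gives no proof at all. Your write-up supplies that omitted verification, correctly noting that $m(c)=0$ forces $c$ to have the form $\left(\begin{smallmatrix} a & b \\ -b & a\end{smallmatrix}\right)$ and that the determinant condition then gives $a^2+b^2=1$, i.e.\ membership in $SO(2)=\exp(\mathbb{R}k)$ as in (\ref{bbb}).
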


\begin{proposition}
\label{sim}
1) The set $\Sim$ of all symmetric matrices from $SL(2)$ has the form
$\Sim=\Sim^+ \cup \Sim^-,$ where
$$\Sim^+=\{c\in \Sim\, \mid\, \trace(c)>0\},\,\,\Sim^-=(-e)\Sim^+,$$
and
\begin{equation}
\label{simp}
\Sim^+=\{c\in \Sim\, \mid\, \trace(c) \geq 2\},
\end{equation}
\begin{equation}
\label{simm}
\Sim^-=(-e)\Sim^+=\{c\in \Sim\, \mid\, \trace(c) \leq -2\}.
\end{equation}

2) $c=\pm e$ if and only if $c\in \Sim$ and $m(c)=0.$

3) The sets $\Sim^+,$ $\Sim^-,$ $\Sim$ are invariant relative to the conjugation by matrices of the subgroup 
$SO(2)\subset SL(2).$

4) For every matrix $c\in \Sim^+,$ $c\neq e,$ 
\begin{equation}
\label{tm}
\trace(c)/2=\sqrt{1+m(c)^2}.
\end{equation}

5) $c\in \Sim^+$ and $c\neq e$ if and only if
\begin{equation}
\label{vs}
c=\left(\begin{array}{ccc}
\Ch a + \cos 2b\Sh a& \sin 2b\Sh a \\
\sin 2b\Sh a& \Ch a-\cos 2b\Sh a
\end{array}\right)=
\end{equation}
$$\left(\begin{array}{ccc}
 \cos b & -\sin b \\
 \sin b & \cos b
\end{array}\right)
\left(\begin{array}{ccc}
 \Ch a + \Sh a & 0 \\
 0 & \Ch a - \Sh a
\end{array}\right)
\left(\begin{array}{ccc}
 \cos b & \sin b \\
-\sin b & \cos b
\end{array}\right),$$
where
\begin{equation}
\label{data}
\Ch a = \trace(c)/2,\,\,\Sh a = m(c),\,\,\cos 2b=(c_{11}-c_{22})/2m(c),\,\,\sin 2b=c_{12}/m(c).
\end{equation}
\end{proposition}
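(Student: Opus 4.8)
The plan is to reduce everything to the single identity (\ref{mdet}) of Proposition \ref{trm}, specialized to matrices of determinant one. For a symmetric $c\in SL(2)$ we have $\det(c)=1$, so (\ref{mdet}) becomes
$$\left(\frac{\trace(c)}{2}\right)^2=1+(m(c))^2,$$
and this one relation drives parts 1 and 4 as well as the consistency checks in part 5. Alongside it I would keep Proposition \ref{so2} and the definition (\ref{mc}) of $m(c)$ at hand throughout.

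For part 1 the displayed identity gives $(\trace(c)/2)^2\ge 1$, hence $|\trace(c)|\ge 2$; in particular $\trace(c)\neq 0$, so $\Sim=\Sim^+\cup\Sim^-$ splits according to the sign of the trace, and (\ref{simp}) is immediate. For (\ref{simm}) I note that $\Det(-e)=1$ and $\trace(-c)=-\trace(c)$, so $c\mapsto(-e)c=-c$ carries $\Sim^+$ bijectively onto the symmetric determinant-one matrices with $\trace\le-2$, which is $\Sim^-$. Part 2 is a direct unwinding of (\ref{mc}): $m(c)=0$ forces $c_{11}=c_{22}$ and, by symmetry, $c_{12}=c_{21}=0$, so $c$ is scalar, and $\det c=1$ then gives $c=\pm e$; the converse is a one-line check. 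Part 3 follows from Proposition \ref{trm}, which already records that $\trace$, $m(\cdot)$ and the symmetry property are invariant under conjugation by $SO(2)$, so invariance of $\Sim$ and $\Sim^+$ is automatic; invariance of $\Sim^-=(-e)\Sim^+$ uses only that $-e$ is central, so conjugation commutes with multiplication by $-e$. Part 4 is the positive square root of the displayed identity, legitimate because $\trace(c)>0$ on $\Sim^+$.

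The substantive part is 5. For the forward direction I would first observe that $c\neq e$ together with part 2 forces $m(c)>0$, so the quantities $\cos 2b=(c_{11}-c_{22})/2m(c)$ and $\sin 2b=c_{12}/m(c)$ from (\ref{data}) are well defined; that they genuinely are cosine and sine of an angle follows by squaring and adding, since for symmetric $c$ the definition (\ref{mc}) gives $4(m(c))^2=(c_{11}-c_{22})^2+4c_{12}^2$. Next I set $a\ge 0$ by $\Ch a=\trace(c)/2\ge 1$; then $\Sh a=m(c)$, because $\Sh a=\sqrt{\Ch^2 a-1}=\sqrt{(\trace(c)/2)^2-1}=m(c)$ by part 4. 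With these values the first matrix in (\ref{vs}) reproduces $c$ entrywise: its diagonal entries are $\Ch a\pm\cos 2b\,\Sh a=(c_{11}+c_{22})/2\pm(c_{11}-c_{22})/2$, and its off-diagonal entry is $\sin 2b\,\Sh a=c_{12}$. The factored expression is just the orthogonal diagonalization $R\,\mathrm{diag}(\Ch a+\Sh a,\Ch a-\Sh a)\,R^{-1}$ with $R$ the rotation by $b$; multiplying it out and applying the double-angle formulas $\cos^2 b-\sin^2 b=\cos 2b$ and $2\sin b\cos b=\sin 2b$ returns the first matrix. For the converse, any $c$ of the form (\ref{vs}) with $a>0$ is visibly symmetric, has $\det c=(\Ch a+\Sh a)(\Ch a-\Sh a)=\Ch^2 a-\Sh^2 a=1$ and $\trace c=2\Ch a>0$, so $c\in\Sim^+$, while $m(c)=\Sh a>0$ gives $c\neq e$.

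I do not expect a genuine obstacle: every step is a short trace or determinant computation anchored on (\ref{mdet}). The only points needing care are bookkeeping — fixing the branch by taking $a\ge 0$ so that $\Sh a=m(c)\ge 0$, and recording that the angle $2b$ is well defined precisely because excluding $c=e$ guarantees $m(c)>0$. The matrix multiplication verifying the factored form in (\ref{vs}) is the longest single calculation, but it is the standard diagonalization of a symmetric $2\times 2$ matrix and presents no difficulty.
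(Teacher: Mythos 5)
Your proof is correct and follows essentially the same route as the paper: parts 2--4 are read off from Propositions \ref{so2}, \ref{trm} and the identity (\ref{mdet}) with $\det(c)=1$, and part 5 is the standard orthogonal diagonalization of a symmetric $2\times 2$ matrix, which you verify in more detail than the paper's one-line remark. The only divergence is in part 1, where the paper deduces $\trace(c)\ge 2$ from $c_{11}c_{22}=1+c_{12}^2$ via the AM--GM inequality while you read $|\trace(c)|\ge 2$ directly off (\ref{mdet}); both are immediate and yours is arguably the more uniform choice.
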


\begin{proof}
1) If $c\in \Sim$ then $c_{11}c_{22}=1+c_{12}^2\geq 1 > 0,$ consequently,
$c\in \Sim^+$ or $c\in \Sim^-.$ If $c\in \Sim^+$ then
\begin{equation}
\label{trd}
\frac{\trace(c)}{2}=\frac{c_{11}+c_{22}}{2}\geq \sqrt{c_{11}c_{22}}=
\sqrt{1+c_{12}^2}\geq 1.
\end{equation}
By (\ref{trd}), we get (\ref{simp}) and (\ref{simm}).

The statements 2), 3), 4) follow respectively from Propositions \ref{so2}, \ref{trm}, (\ref{mdet}).

5) The sufficiency is obvious. The representation of any matrix
$c\in \Sim^+-\{e\}$ in the form (\ref{vs}) guarantees the statement 2) and formulae (\ref{simp}), (\ref{tm}), (\ref{data}).
\end{proof}

\begin{corollary}
\label{exist}
For every number $m_0\geq 0$, there exists a unique up to the conjugation by matrices of the subgroup $SO(2)\subset SL(2)$ matrix $c\in \Sim^+$ such that $m(c)=m_0.$
Additionally, (\ref{tm}) is valid.
\end{corollary}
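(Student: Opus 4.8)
The plan is to read the Corollary off directly from Proposition \ref{sim}, handling the degenerate value $m_0=0$ separately from $m_0>0$. First I would dispose of $m_0=0$. By part 2 of Proposition \ref{sim}, a matrix $c\in\Sim$ has $m(c)=0$ precisely when $c=\pm e$, and of these only $c=e$ lies in $\Sim^+$, since $\trace(-e)=-2<0$. Because $e$ is fixed under every conjugation, this settles both existence and uniqueness when $m_0=0$, and $(\ref{tm})$ holds directly: $\trace(e)/2=1=\sqrt{1+0}$.

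For $m_0>0$ existence is obtained by exhibiting a representative explicitly. Choosing the unique $a>0$ with $\Sh a=m_0$ and setting
\[
c_0=\left(\begin{array}{cc}\Ch a+\Sh a & 0\\ 0 & \Ch a-\Sh a\end{array}\right),
\]
one has $c_0^{T}=c_0$, $\det c_0=(\Ch a)^{2}-(\Sh a)^{2}=1$, and $\trace(c_0)=2\Ch a>0$, so $c_0\in\Sim^+$; moreover $m(c_0)=\Sh a=m_0$ straight from $(\ref{mc})$. This is simply $(\ref{vs})$ with $b=0$, so a matrix with the prescribed value of $m$ always exists.

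The uniqueness up to $SO(2)$-conjugation is where the genuine content of Proposition \ref{sim} is used. Suppose $c,c'\in\Sim^+$ satisfy $m(c)=m(c')=m_0>0$; then neither equals $e$, so by part 5 each admits the representation $(\ref{vs})$. In both representations the parameter $a$ is the same, because $\Sh a=m_0$ together with the constraint $\Ch a>0$ forces $a=\Arcsh m_0$, whence the central diagonal factor $D=\mathrm{diag}(\Ch a+\Sh a,\Ch a-\Sh a)$ coincides for $c$ and $c'$. Writing $c=BDB^{-1}$ and $c'=B'D(B')^{-1}$ with $B,B'\in SO(2)$ (the rotation factors of $(\ref{vs})$ lie in $SO(2)$ by $(\ref{bbb})$ with $\phi=2b$), I get $c'=(B'B^{-1})\,c\,(B'B^{-1})^{-1}$, a conjugation by the element $B'B^{-1}\in SO(2)$. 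Finally, $(\ref{tm})$ for such $c$ is exactly part 4 of Proposition \ref{sim}.

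There is no genuine obstacle here, as the statement is a clean corollary of the normal form $(\ref{vs})$; the only point demanding care is the claim that $a$, and hence the diagonal factor $D$, is determined by $m_0$ alone. This rests on the sign condition $\Ch a>0$ coming from $c\in\Sim^+$, which removes the ambiguity $a\mapsto -a$ and pins down the eigenvalues $\Ch a\pm\Sh a$ of $c$.
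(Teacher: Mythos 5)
Your proof is correct and follows exactly the route the paper intends: the corollary is stated without a separate argument precisely because it is read off from Proposition \ref{sim} (part 2 for $m_0=0$, the normal form (\ref{vs}) with the data (\ref{data}) for existence and $SO(2)$-conjugacy when $m_0>0$, and part 4 for (\ref{tm})), which is what you do. The only cosmetic remark is that $\Sh a=m_0>0$ already determines $a$ uniquely since $\Sh$ is a bijection of $\mathbb{R}$, so no appeal to $\Ch a>0$ is needed to exclude $a\mapsto -a$.
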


It follows from Theorem \ref{theor1}, \ref{th}, and Proposition \ref{sim} that

\begin{corollary}
\label{op}
A matrix $c\in \Sim^+-\{e\}$ if and only if
$c= \gamma(0,\phi; t),$ where
\begin{equation}
\label{data1}
\Sh (t/2) = m(c),\quad \cos \phi=(c_{11}-c_{22})/2m(c),\quad
\sin \phi=c_{12}/m(c).
\end{equation}
As a consequence, $\Sim^+=\exp(D(e)).$
\end{corollary}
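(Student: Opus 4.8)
The plan is to specialize the explicit geodesic formula to the case $\beta=0$ and to recognize the result as a symmetric matrix whose invariants $\trace$ and $m(\cdot)$ are exactly the quantities appearing in (\ref{data1}). First I would set $\beta=0$ in Theorem \ref{th}: then $\beta^2<1$, so (\ref{mn2}) applies with $m=\Sh(t/2)$ and $n=\Ch(t/2)$, and substituting $\beta=0$ into (\ref{geod}) collapses every trigonometric factor and yields the symmetric matrix
\[
\gamma(0,\phi;t)=\begin{pmatrix} \Ch\frac{t}{2}+\Sh\frac{t}{2}\cos\phi & \Sh\frac{t}{2}\sin\phi \\ \Sh\frac{t}{2}\sin\phi & \Ch\frac{t}{2}-\Sh\frac{t}{2}\cos\phi \end{pmatrix}.
\]
Its trace equals $2\Ch(t/2)\geq 2$, so by (\ref{simp}) it lies in $\Sim^+$, and a direct computation from (\ref{mc}) gives $m\big(\gamma(0,\phi;t)\big)=|\Sh(t/2)|$. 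This establishes the forward implication together with the relations (\ref{data1}), once $\cos\phi$ and $\sin\phi$ are read off from the diagonal-difference and off-diagonal entries.

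For the converse, given $c\in\Sim^+-\{e\}$ I would first note that $m(c)>0$: by part 2 of Proposition \ref{sim} the equality $m(c)=0$ forces $c=\pm e$, and $\trace(c)\geq 2$ rules out $c=-e$, so $c\neq e$ leaves $m(c)>0$. Then $\Sh(t/2)=m(c)$ determines a unique $t>0$, while (\ref{tm}) of Proposition \ref{sim} supplies $\Ch(t/2)=\sqrt{1+m(c)^2}=\trace(c)/2$. The angle $\phi$ is defined by (\ref{data1}); the key consistency check is $\cos^2\phi+\sin^2\phi=1$, which follows because $c$ is symmetric (so $c_{12}=c_{21}$), whence $4m(c)^2=(c_{11}-c_{22})^2+4c_{12}^2$ by (\ref{mc}). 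With $t$ and $\phi$ so chosen, comparing the entries of the displayed matrix against those of $c$, using $\Sh(t/2)\cos\phi=(c_{11}-c_{22})/2$, $\Sh(t/2)\sin\phi=c_{12}$, and $\Ch(t/2)=(c_{11}+c_{22})/2$, yields $\gamma(0,\phi;t)=c$.

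Finally, for the consequence $\Sim^+=\exp(D(e))$ I would observe that when $\beta=0$ the second factor in (\ref{sol}) is $\exp(0)=e$, so $\gamma(0,\phi;t)=\exp\big(t(\cos\phi\,p_1+\sin\phi\,p_2)\big)$, while every vector of $D(e)=\Lin(p_1,p_2)$ has exactly this form $t(\cos\phi\,p_1+\sin\phi\,p_2)$ for some $t\geq 0$ and some $\phi$. Hence the image $\exp(D(e))$ is precisely $\{e\}\cup\{\gamma(0,\phi;t):t>0\}$, which by the equivalence just proved equals $\Sim^+$. I expect the only delicate point to be the consistency verification for $\phi$, since that is exactly where the symmetry of $c$ and the identity (\ref{tm}) are indispensable; the remaining steps are bookkeeping on matrix entries.
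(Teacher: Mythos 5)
Your proposal is correct and follows essentially the route the paper intends: the paper gives no written proof, merely asserting that the corollary follows from Theorems \ref{theor1}, \ref{th} and Proposition \ref{sim}, and your specialization of (\ref{geod}) to $\beta=0$ (giving the symmetric matrix with entries $\Ch\frac{t}{2}\pm\Sh\frac{t}{2}\cos\phi$ and $\Sh\frac{t}{2}\sin\phi$) is exactly the parametrization (\ref{vs}) of $\Sim^+-\{e\}$ with $a=t/2$, $2b=\phi$, so the two directions and the identity $\gamma(0,\phi;t)=\exp\bigl(t(\cos\phi\,p_1+\sin\phi\,p_2)\bigr)$ all check out. No gaps.
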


\section{Cut loci and conjugate sets in $(SL(2),\delta)$}

Unlike the Riemannian manifolds, the exponential map $\Exp_x,$ $x\in M,$
for a sub-Riemannian manifold $(M,d)$ with no abnormal geodesic (as in the case of $(SL(2),\delta)$) are defined not on $TM$ and $T_xM$ but on $D(x)\times \Ann(D(x))$, where $D$ is a distribution on $M$ involved in the definition of $d,$ and
$$\Ann(D(x))=\{\psi\in T^{\ast}_xM: \langle\psi,D(x)\rangle=0\},$$
see \cite{VG}.
Otherwise, the cut loci and conjugate sets for such sub-Riemannian manifolds are defined in the same way as for the Riemannian ones \cite{GKM}.

\begin{definition}
\label{cutl}
The cut locus $C(x)$ (respectively, (the first) conjugate set $S(x)$ ($S_1(x)$)) for a point $x$ in a sub-Riemannian manifold $M$ (with no abnormal geodesic) is the set of ends $y\in M$ of all shortest arcs joining the point $x$ with
the point $y$ and noncontinuable beyond $y$ (respectively, the image of the set of (the first) critical points (along geodesics with the origin $x$) of the map $\Exp_x$
with respect to $\Exp_x$).
\end{definition}

The main result of this section is

\begin{theorem}
\label{cutloc}
For every $g\in (SL(2),\delta)$, $C(g)=gC(e)$ and $S(g)=gS(e)$. Moreover
\begin{equation}
\label{cutloc0}
C(e)=K(e)\cup S_1(e),
\end{equation}
where
\begin{equation}
\label{cutloc1}
K(e)=\Sim^{-}=\left\{c \in SL(2)\,\mid\,c^T=c,\,\trace(c)\leq -2 \right\},
\end{equation}
\begin{equation}
\label{congloc}
S_1(e)=SO(2)-\{e\}.
\end{equation}
Also $K(e)$ is diffeomorphic to $\mathbb{R}^2,$\,\,$S_1(e)$ is diffeomorphic to $\mathbb{R},$\,\,$S_1(e)\cap K(e)=\{-e\}.$
\end{theorem}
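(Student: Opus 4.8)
The plan is to extract everything from the explicit geodesic formula (\ref{geod}) together with the isometries of $(SL(2),\delta)$ fixing $e$. The first assertion is immediate: every left translation $l_g$ is an isometry taking geodesics issuing from $e$ to geodesics issuing from $g$ and preserving both length‑minimality and the rank of $d\Exp$, whence $C(g)=gC(e)$ and $S(g)=gS(e)$. To compute $C(e)$ I would use two isometries fixing $e$: the inner automorphisms $I(B)=l_B\circ r_{B^{-1}}$, $B\in SO(2)$ (Proposition \ref{rem3}), acting by $\gamma(\beta,\phi;\cdot)\mapsto\gamma(\beta,\phi+\mathrm{const};\cdot)$, and the automorphism $\sigma(g)=JgJ^{-1}$ with $J=\mathrm{diag}(1,-1)$, whose differential fixes $p_1$ and negates $p_2,k$, hence preserves $(D(e),\langle\cdot,\cdot\rangle)$; one checks $\sigma(\gamma(\beta,\phi;t))=\gamma(-\beta,-\phi;t)$. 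Since $C(e)$, $\Sim^-$ and $SO(2)-\{e\}$ are all invariant under $SO(2)$‑conjugation, and $SO(2)$‑orbits are separated by $\trace$ and $m(\cdot)$ (Proposition \ref{trm}), it suffices to read these invariants off (\ref{geod}): with $m,n$ as in (\ref{mn1})--(\ref{mn3}) one gets $m(\gamma(\beta,\phi;t))=|m|$ and $\trace(\gamma(\beta,\phi;t))/2=n\cos\frac{\beta t}{2}+\beta m\sin\frac{\beta t}{2}$, both independent of $\phi$.

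I would then isolate the two ways minimality can fail. By Proposition \ref{so2}, $\gamma(\beta,\phi;t)\in SO(2)$ iff $m=0$; by (\ref{mn1})--(\ref{mn3}) this never occurs for $t>0$ when $\beta^2\le1$, and first occurs at $t_c(\beta)=2\pi/\sqrt{\beta^2-1}$ when $\beta^2>1$. To locate conjugate points I would compute the Jacobian of $(\phi,\beta,t)\mapsto\gamma$ in the left‑translated frame on $\mathfrak{sl}(2)$; factoring out the common rotation $\Ad(\exp(t\beta k))$, the determinant reduces to a multiple of
\[
ct\sin(ct)-2\bigl(1-\cos(ct)\bigr),\qquad c=\sqrt{\beta^2-1},
\]
whose first positive zero is $ct=2\pi$. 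Thus the first conjugate time is exactly $t_c(\beta)$, where $\gamma$ returns to $SO(2)$; evaluating (\ref{geod}) there (with $m=0$, $n=-1$) gives $\gamma(t_c)=\exp\bigl((2\pi-\beta t_c)k\bigr)\in SO(2)$, and as $\beta$ varies these points fill $SO(2)-\{e\}$, with $-e=\exp(2\pi k)$ attained at $\beta^2=4/3$. Identifying the precise set — in particular excluding $e$, which arises as a first conjugate point only for $|\beta|<\beta^\ast:=2/\sqrt3$, i.e. beyond the cut time found below — uses the comparison of $t_c$ with the Maxwell time. This yields $S_1(e)=SO(2)-\{e\}\cong\mathbb{R}$ (parametrized by the rotation angle).

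For $K(e)$ the mechanism is Maxwell symmetry via $\sigma$. The fixed‑point set of $\sigma$ is the diagonal matrices; for a diagonal $c\in\Sim^-$ any solution $\gamma(\beta,\phi;t)=c$ produces a second geodesic $\gamma(-\beta,-\phi;t)=\sigma(c)=c$ of the same length, distinct for $\beta\ne0$, so $c$ is a genuine Maxwell point, and conjugating by $I(B)$ makes every $c\in\Sim^-$ a Maxwell point. I would then show: the $\beta=0$ geodesics are exactly those filling $\Sim^+$ (Corollary \ref{op}) and carry no cut point; while for $\beta\ne0$ the first symmetric instant $t_s(\beta)$ (solving $n\sin\frac{\beta t}{2}=\beta m\cos\frac{\beta t}{2}$) has negative trace, i.e. lands in $\Sim^-$, and $m$ runs over all of $(0,\infty)$ as $|\beta|$ runs over $(0,\beta^\ast)$. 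Comparing $t_s$ with $t_c$ gives the dichotomy: for $|\beta|<\beta^\ast$ the Maxwell point comes first and the cut point lies in $\Sim^-$; for $|\beta|>\beta^\ast$ the conjugate point comes first and the cut point lies in $SO(2)$; at $|\beta|=\beta^\ast$ both coincide at $-e$, giving $S_1(e)\cap K(e)=\{-e\}$. The parametrization (\ref{vs}) of $\Sim^-=(-e)\Sim^+$ by $(a,b)$ then exhibits $K(e)\cong\mathbb{R}^2$ (polar‑type coordinates centred at $-e$).

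The main obstacle is global optimality: proving each geodesic actually minimizes up to $t_{\mathrm{cut}}=\min(t_s,t_c)$ and no further, so that the points found above are precisely the cut points. The soft direction is standard (beyond a Maxwell point one minimizer may be replaced by the other and shortened; no geodesic minimizes past its first conjugate point, so $t_{\mathrm{cut}}\le\min(t_s,t_c)$); the hard direction is to show $\Exp_e$ is a diffeomorphism of the open sub‑cut‑time region $\{\,0<t<t_{\mathrm{cut}}(\beta,\phi)\,\}$ onto $SL(2)\setminus C(e)$, yielding a unique minimizer to each point off $\Sim^-\cup SO(2)$ and injectivity of the $\beta=0$ sheet onto $\Sim^+-\{e\}$. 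Establishing this injectivity and properness uniformly across the regimes $\beta^2<1$, $\beta^2=1$, $\beta^2>1$, and controlling the boundary behaviour near $\pm e$ where the Maxwell and conjugate strata meet, is where the real work lies; by contrast the location of the Maxwell and conjugate sets is the finite computation above.
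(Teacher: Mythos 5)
Your identification of the Maxwell and conjugate strata agrees with the paper's computation (your determinant $ct\sin(ct)-2(1-\cos(ct))$ factors as $-2c\sin(ct/2)\,(2m-tn)$, i.e.\ exactly the critical locus $m=0$ or $2m-tn=0$ found in Proposition \ref{se}), and your reflection $\sigma=\Ad(\mathrm{diag}(1,-1))$ is a legitimate substitute for the paper's $t\mapsto -t$ symmetry $\gamma(\beta,\phi_0;T)=\gamma(\beta,\beta T+\phi_0+\pi;-T)$ used in Proposition \ref{suf}. However, there are two genuine gaps. First, you assert that the $\beta=0$ geodesics ``carry no cut point'' without argument; this is not a formal consequence of the formulas and the paper proves it separately (Proposition \ref{bou0}) by exhibiting a length-preserving submetry $L:(SL(2),\delta)\to(SO_0(2,1),d)$ under which these geodesics map to one-parameter subgroups known to be globally minimizing. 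Without this, $\Sim^{+}$ is not excluded from $C(e)$ and the equality \eqref{cutloc0} is not established.

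Second, and more seriously, the reverse inclusion $C(e)\subset K(e)\cup S_1(e)$ --- which you correctly flag as the crux --- is left entirely undone, and the route you propose for it (showing $\Exp_e$ is a diffeomorphism of the open sub-cut-time region onto $SL(2)\setminus C(e)$, with ``injectivity and properness uniformly across the regimes'') is both unexecuted and heavier than necessary. The paper closes this inclusion with a short soft argument that avoids any global study of $\Exp_e$: a limiting argument via the Cohn--Vossen existence theorem and the inverse function theorem (Theorem \ref{con}) shows that every cut point outside the conjugate set is the common endpoint of two \emph{distinct} minimizers of equal length; the strict monotonicity of $\Sh x/x$ and $\sin x/x$ applied to the invariant $m(c)$ forces the two minimizers to satisfy $\beta_2^2=\beta_1^2$ and then $\beta_2=-\beta_1$ (Proposition \ref{kk1}); and the oddness of $c_{12}-c_{21}$ in $t$ then forces the endpoint to be a symmetric matrix, hence in $\Sim^{-}$ once $\Sim^{+}$ is excluded (Propositions \ref{incl}, \ref{simnot}), with Proposition \ref{bz} guaranteeing that such double points really are cut points. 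As written, your proposal proves only the inclusion $K(e)\cup (SO(2)-\{e\})\subset C(e)$ and defers the other half; you should either carry out your global diffeomorphism argument or replace it by a Maxwell-point classification of the above type.
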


\begin{proposition}
\label{bou0}
Every segment  $\gamma(t)=\gamma(0,\phi;t)$, $0\leq t\leq t_1$, is a shortest arc.
\end{proposition}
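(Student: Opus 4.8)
The plan is to show that the length $t_1$ of the segment equals the distance $d(e,c)$, where $c=\gamma(0,\phi;t_1)$; since the geodesic is unit-speed by (\ref{s1}), its length already gives $d(e,c)\le t_1$, so the only real content is the lower bound $d(e,c)\ge t_1$. First I would reduce to the case $\phi=0$: by Proposition \ref{rem3} the conjugation $I(B)$ by $B=\exp(\phi k)\in SO(2)$ is an isometry of $(SL(2),\delta)$, and by (\ref{ad}) with $\beta=0$ one has $\Ad B(p_1)=\cos\phi\,p_1+\sin\phi\,p_2$, so $I(B)$ carries $\exp(tp_1)$ to $\gamma(0,\phi;t)$. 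Hence it suffices to treat the one-parameter subgroup $\exp(tp_1)$, whose endpoint lies in $\Sim^+$ by Corollary \ref{op}.

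For the lower bound I would use a calibration built directly from the matrix entries, in the spirit of the explicit approach of Theorem \ref{th}. Set $F(g):=\tfrac12\trace(g^Tg)$. Then $F\ge 1$ on $SL(2)$ (the eigenvalues of $g^Tg$ are $\lambda,\lambda^{-1}>0$), $F(e)=1$, and for symmetric $c\in\Sim^+$ one checks, using (\ref{mdet}), (\ref{tm}) and Corollary \ref{op}, that $F(c)=\Ch t_1$ with $\Sh(t_1/2)=m(c)$, so that $\operatorname{arcch}F(c)=t_1$. The goal is then to prove that $\operatorname{arcch}(F\circ g)$ is $1$-Lipschitz along every horizontal curve parametrized by arclength; integrating from $e$ yields $\operatorname{arcch}F(c)\le L$ for any horizontal path of $\delta$-length $L$ from $e$ to $c$, i.e. $d(e,c)\ge t_1$, which closes the argument.

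The computation at the heart of this is short. If $g=g(s)$ is horizontal and unit-speed then $\dot g=gw$ with $w=w(s)\in D(e)=\Lin(p_1,p_2)$ symmetric and traceless, and using $w^T=w$ together with cyclicity of the trace one gets $\dot F=\trace(w\,g^Tg)=\trace\bigl(w\,(g^Tg)_0\bigr)$, where $(g^Tg)_0$ is the traceless part of $g^Tg$. Cauchy--Schwarz for the trace form on symmetric matrices then gives $|\dot F|\le\sqrt{\trace(w^2)}\,\sqrt{\trace((g^Tg)_0^2)}=\sqrt{F^2-1}$, the last equality coming from (\ref{mdet})/(\ref{tm}) applied to $g^Tg\in\Sim^+$ once the normalization making $p_1,p_2$ orthonormal (so that $\trace(w^2)=\tfrac12$ for unit-speed $w$) is inserted. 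Therefore $\tfrac{d}{ds}\operatorname{arcch}(F\circ g)=\dot F/\sqrt{F^2-1}\le 1$, the desired inequality. Along $\exp(sp_1)$ the vector $w\equiv p_1$ is parallel to $(g^Tg)_0$, so Cauchy--Schwarz is an equality and the rate is exactly $1$, reconfirming that the length of the segment is $t_1$ and that the bound is sharp.

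The main obstacle is precisely this differential inequality $|\dot F|\le\sqrt{F^2-1}$ together with the identification $\operatorname{arcch}F(c)=t_1$; everything else is bookkeeping. Two technical points deserve care: fixing the normalization of $\langle\cdot,\cdot\rangle$ consistently (equivalently, demanding that $\exp(tp_1)$ have unit speed) so that the constant in Cauchy--Schwarz comes out to exactly $\sqrt{F^2-1}$; and the mild non-smoothness of $\operatorname{arcch}F$ on the set $\{F=1\}=SO(2)$, which I would bypass by integrating the inequality in the form $\dot F\le\sqrt{F^2-1}$ directly, or by working on $s>0$ and passing to the limit. Conceptually, this calibration is nothing but the assertion that the projection $\pi\colon SL(2)\to SL(2)/SO(2)=L^2$, whose fibers $g\,SO(2)$ are the $\delta$-orthogonal complements of $D$, is a Riemannian submersion onto the Lobachevskii plane with $F=\Ch d_{L^2}(\pi(e),\pi(\cdot))$: horizontal curves do not shrink under $\pi$, while $\exp(t\xi)$ with $\xi\in D(e)$ projects to a globally minimizing geodesic of $L^2$. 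This gives the same lower bound and explains why the $\beta=0$ segments are minimizing for \emph{all} $t_1$.
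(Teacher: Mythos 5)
Your argument is correct, but it reaches the conclusion by a genuinely different route than the paper. The paper's proof is a pure reduction: it invokes the length-preserving epimorphism (submetry) $L\colon SL(2)\to SL(2)/\{\pm e\}\cong SO_0(2,1)$ matching $p_1,p_2,k$ with the basis of \cite{Ber1}, and then quotes Lemma~2 of \cite{Ber1}, where minimality of every segment of the corresponding one-parameter subgroup in $(SO_0(2,1),d)$ was already established; nothing is computed inside $SL(2)$ itself. You instead give a self-contained calibration: with $F(g)=\tfrac12\trace(g^Tg)$ one indeed has $\dot F=\trace\bigl(w\,(g^Tg)_0\bigr)$ along unit-speed horizontal curves, and since $\trace(w^2)=\tfrac12$ while $\trace\bigl((g^Tg)_0^2\bigr)=2\,m(g^Tg)^2=2(F^2-1)$ by (\ref{mc}) and (\ref{tm}), Cauchy--Schwarz gives $\dot F\le\sqrt{F^2-1}$ and hence $\operatorname{arcch}F(c)\le d(e,c)$, which equals $t_1$ at $c=\gamma(0,\phi;t_1)$; I checked the constants and they come out exactly as you claim. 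The two technical points you flag are real but minor: the degeneracy at $F=1$ requires the maximal-solution form of the ODE comparison theorem (or an $\varepsilon$-shift of the initial value, since $\sqrt{u^2-1}$ is not Lipschitz at $u=1$), and the normalization $\langle\cdot,\cdot\rangle=\tfrac12(\cdot,\cdot)$ on $D(e)$ is stated in the proof of Theorem~\ref{theor1}. Also, the reduction to $\phi=0$ is harmless but unnecessary, since $\gamma(0,\phi;t_1)\in\Sim^+$ with $m=\Sh(t_1/2)$ for every $\phi$. What your approach buys is independence from \cite{Ber1} and, as a by-product, the explicit formula $d(e,c)=\operatorname{arcch}\bigl(\tfrac12\trace(c^Tc)\bigr)$ for $c\in\Sim^+$, i.e.\ the exact sub-Riemannian distance to the symmetric leaf; what the paper's approach buys is brevity and uniformity with its companion papers, where the same Lobachevskii-plane picture (your closing remark about the submersion onto $L^2=SL(2)/SO(2)$, which is the conceptual content of both proofs) is developed once and reused.
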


\begin{proof}
It is known that the Lie group $SL(2)/\{\pm e\}$ is isomorphic to the Lie group of all orientation-preserving isometries of the Lobachevskii plane with sectional curvature $-1$, and the last group is isomorphic to the Lie group $SO_0(2,1),$  the connected component of the unit in the Lorentz group $SO(2,1)$ (see, for example, \cite{Ber1}). By Theorem 1 from \cite{Ber1}, there exists a locally isomorphic epimorphism of the Lie groups
 $$L:SL(2) \rightarrow SL(2)/\{\pm e\}\cong SO_0(2,1)$$
such that, in terms of this paper and paper \cite{Ber1},
$$dL(e)(p_1)=a,\quad dL(e)(p_2)=b,\quad dL(e)(k)=c.$$
Therefore the map $L:(SL(2),\delta)\rightarrow (SO_0(2,1),d)$ is a submetry \cite{BG} preserving the lengths of curves.
Consequantly, $L(\gamma(t)),$ $t\in \mathbb{R}$, is a geodesic and  simultaneously 1--parameter subgroup in
$(SO_0(2,1),d),$ so by Lemma 2 from \cite{Ber1}, every its segment is a shortest arc. Then the same statement is true for $\gamma.$
\end{proof}

It follows directly from Corollary \ref{op} and Proposition \ref{bou0} that
\begin{proposition}
\label{simnot}
$C(e)\cap Sim^+ = \emptyset.$
\end{proposition}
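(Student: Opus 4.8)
The plan is to exploit the two quoted results to realize each matrix $c\in\Sim^+$ as an \emph{interior} point of a globally shortest arc emanating from $e$, and then to invoke the general principle that an interior point of a shortest arc is never a cut point. The first half of this is a direct substitution; the only genuine content lies in justifying that last principle in the present sub-Riemannian setting.

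First I would dispose of the case $c=e$, which belongs to $\Sim^+$ but is the origin itself, so trivially $e\notin C(e)$ (every shortest arc in Definition \ref{cutl} has positive length). For $c\in\Sim^+-\{e\}$ one has $m(c)>0$ by part 2 of Proposition \ref{sim}, and Corollary \ref{op} then furnishes an angle $\phi$ and a value $t>0$, with $\Sh(t/2)=m(c)$, such that $c=\gamma(0,\phi;t)$; thus $c$ sits on the geodesic $\gamma(0,\phi;\cdot)$ corresponding to $\beta=0$. By Proposition \ref{bou0}, the segment $\gamma(0,\phi;s)$, $0\le s\le s_1$, is a shortest arc for \emph{every} $s_1>0$. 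Choosing any $s_1>t$ exhibits $c=\gamma(0,\phi;t)$ as an interior point of the shortest arc joining $e$ to $\gamma(0,\phi;s_1)$, so the shortest arc reaching $c$ prolongs beyond $c$ while remaining shortest.

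Finally I would argue that such a $c$ cannot lie in $C(e)$, and here is where the hypotheses do real work. Because the distribution is totally nonholonomic and $(SL(2),\delta)$ admits no abnormal geodesics (Theorem 3 of \cite{Ber0}, already used in the proof of Theorem \ref{theor1}), every shortest arc is a smooth normal geodesic. Consequently, if there were a shortest arc from $e$ to $c$ distinct from $\gamma(0,\phi;\cdot)|_{[0,t]}$, then concatenating it with the prolongation $\gamma(0,\phi;\cdot)|_{[t,s_1]}$ would produce a shortest arc from $e$ to $\gamma(0,\phi;s_1)$ having a corner at $c$; cutting that corner would strictly shorten it, a contradiction. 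Hence $\gamma(0,\phi;\cdot)|_{[0,t]}$ is the \emph{unique} shortest arc to $c$, and since it is continuable beyond $c$ as a shortest arc, $c$ is not the endpoint of any noncontinuable shortest arc. By Definition \ref{cutl} this means $c\notin C(e)$, and therefore $C(e)\cap\Sim^+=\emptyset$. The main obstacle is exactly this uniqueness-and-corner-cutting step: once one grants the standard fact that interior points of shortest arcs are not cut points, the statement reduces to the purely mechanical application of Corollary \ref{op} and Proposition \ref{bou0} announced in the text.
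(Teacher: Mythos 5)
Your proof is correct and takes essentially the same route as the paper, whose entire proof is the one line ``It follows directly from Corollary \ref{op} and Proposition \ref{bou0}'' --- i.e., precisely your realization of each $c\in\Sim^+-\{e\}$ as an interior point of the arbitrarily prolongable shortest arcs $\gamma(0,\phi;\cdot)$. The only step the paper leaves implicit is the one you spell out (an interior point of a shortest arc is not a cut point); note that your ``corner'' at $c$ could a priori be a tangency of two distinct arcs, so the cleanest way to close that step is either unique continuation of the real-analytic normal geodesics or the paper's Proposition \ref{bz}, which forbids a second shortest arc to $c$ outright because $\gamma(0,\phi;t)$, $0\le t\le t_1$, is part of a longer shortest arc.
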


The following proposition was proved in paper \cite{BerZub1} (see Corollary 1 in \cite{BerZub1}):

\begin{proposition}
\label{bz}
If two points in a three-dimensional Lie group $G$ with a left-invariant sub-Riemannian metric  
are joined by two different geodesics, parametrized by arclength, of equal length, then any of these geodesics 
either is not a shortest arc or is not a part of a longer shortest arc.
\end{proposition}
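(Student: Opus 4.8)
The plan is to show that the presence of a second, equally long geodesic forces the common endpoint to be a cut point, after which no minimizer can be prolonged through it. Fix arclength geodesics $\gamma_1\neq\gamma_2$ on $[0,T]$ with $\gamma_1(0)=\gamma_2(0)=x$, $\gamma_1(T)=\gamma_2(T)=y$ and common length $T$. Since the two indices play symmetric roles, it suffices to argue for $\gamma_1$: assuming that $\gamma_1|_{[0,T]}$ \emph{is} a shortest arc, I would prove it cannot be a part of any strictly longer shortest arc. First I note that then $d(x,y)=T$, so $\gamma_2|_{[0,T]}$, having length $T$ and the same endpoints, is a shortest arc as well.

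Two structural inputs drive the argument. (i) In a three-dimensional Lie group with left-invariant sub-Riemannian metric there are no abnormal geodesics (Theorem 3 of \cite{Ber0}), so every shortest arc is a smooth normal geodesic parametrized by arclength; consequently every sub-arc of a shortest arc is again a shortest arc. (ii) Every arclength geodesic is a part of a unique maximal such geodesic (Corollary \ref{cor}); hence any shortest arc that contains a given geodesic arc, being itself a normal geodesic by (i), is a sub-arc of the \emph{same} maximal geodesic as that arc. Suppose now, for contradiction, that $\gamma_1|_{[0,T]}$ is a proper sub-arc of a shortest arc $\sigma$ of length $>T$. By (i)--(ii), $\sigma=\gamma_1|_{[s_0,s_1]}$ with $[0,T]\subseteq[s_0,s_1]$ and $s_1-s_0>T$; after possibly reversing the parametrizations of both $\gamma_1$ and $\gamma_2$ (which preserves all hypotheses), I may assume the extension occurs beyond $y$, i.e. $s_1=T'>T$, so that $\gamma_1|_{[0,T']}$ is a shortest arc.

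Put $z=\gamma_1(T')$ and concatenate $\gamma_2|_{[0,T]}$ (from $x$ to $y$) with $\gamma_1|_{[T,T']}$ (from $y$ to $z$), obtaining an arclength curve $c$ from $x$ to $z$ of length $T+(T'-T)=T'=d(x,z)$. Hence $c$ is a shortest arc, so by (i) it is a smooth normal geodesic parametrized by arclength. Since $c$ coincides with $\gamma_1$ on $[T,T']$ as an arclength-parametrized arc, Corollary \ref{cor} forces $c$ and $\gamma_1$ to be parts of one and the same maximal geodesic, with identical parametrization on the overlap; therefore $c(t)=\gamma_1(t)$ for every $t\in[0,T']$. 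But $c|_{[0,T]}=\gamma_2|_{[0,T]}$ by construction, whence $\gamma_1|_{[0,T]}=\gamma_2|_{[0,T]}$, contradicting $\gamma_1\neq\gamma_2$. Thus $\gamma_1|_{[0,T]}$ is not a part of a longer shortest arc, and the symmetric conclusion for $\gamma_2$ completes the proof.

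I expect the only genuine difficulty to be the regularity input (i): that a shortest arc of the metric $\delta$ is an honest smooth geodesic and not merely a piecewise-geodesic curve. This is exactly where three-dimensionality (the absence of abnormal minimizers) is used. Once it is granted, the rest is the classical principle that a minimizer cannot have a corner, packaged here through the uniqueness in Corollary \ref{cor} so that no separate first-variation computation at the corner $y$ is required.
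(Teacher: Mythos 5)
The paper does not actually prove Proposition~\ref{bz}: it is imported verbatim as Corollary~1 of \cite{BerZub1}, so there is no in-paper argument to compare against. Your proof is the standard ``no minimizer extends past a cut point'' argument, and as far as I can check it is correct and self-contained modulo the two inputs you name: (i) in dimension three every geodesic (hence every shortest arc) is a normal geodesic, by Theorem~3 of \cite{Ber0} as invoked in the proof of Theorem~\ref{theor1}; and (ii) the uniqueness of the complete extension in Corollary~\ref{cor}. The reduction to an extension beyond the common terminal point by reversing both parametrizations is legitimate (the metric is symmetric, so reversal preserves geodesics and shortest arcs), the concatenated curve has length equal to $d(x,z)$ and is therefore a shortest arc, and your use of Corollary~\ref{cor} --- applying its uniqueness clause to the common sub-arc on $[T,T']$ to force $c=\gamma_1$ on all of $[0,T']$ --- is a valid deduction from that corollary as stated. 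The one point worth making explicit is that this last step is where the real content lies: in a general sub-Riemannian manifold a normal geodesic need not be determined by one of its sub-arcs (the covector lift need not be unique), so the whole argument hinges on the uniqueness assertion of Corollary~\ref{cor}, which the paper supplies for this class of metrics; you lean on it correctly rather than on a velocity-matching (``no corner'') argument, which would not suffice in the sub-Riemannian setting. A minor stylistic quibble: the fact that sub-arcs of shortest arcs are shortest arcs follows from the triangle inequality, not from regularity, so the ``consequently'' in your point (i) is misplaced but harmless.
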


\begin{proposition}
\label{sss}
If a segment $\gamma(t)$, $0\leq t\leq t_0$, of the geodesic (\ref{sol}) is a shortest arc and $\beta^2>1$, then $t_0\leq\frac{2\pi}{\sqrt{\beta^2-1}}$.
\end{proposition}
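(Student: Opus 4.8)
The plan is to exhibit, among the geodesics issuing from $e$ with a fixed value of $\beta$, two distinct ones of equal length that arrive at the same point precisely at the critical time $t_*:=\frac{2\pi}{\sqrt{\beta^2-1}}$, and then to apply Proposition~\ref{bz} to forbid shortness beyond $t_*$. First I would evaluate $\gamma(\beta,\phi;t_*)$ using the explicit matrix (\ref{geod}) of Theorem~\ref{th}. Since $\beta^2>1$, the quantities $m,n$ are those of (\ref{mn3}); at $t=t_*$ the argument $\frac{t\sqrt{\beta^2-1}}{2}$ equals $\pi$, whence $m=0$ and $n=-1$. The decisive observation is that in (\ref{geod}) every appearance of $\phi$ is confined to a summand carrying the factor $m$; so with $m=0$ the endpoint reduces to
\begin{equation*}
\gamma(\beta,\phi;t_*)=\left(\begin{array}{cc} n\cos\frac{\beta t_*}{2} & n\sin\frac{\beta t_*}{2} \\ -n\sin\frac{\beta t_*}{2} & n\cos\frac{\beta t_*}{2}\end{array}\right)\in SO(2),
\end{equation*}
an element of $SO(2)$ that does not depend on $\phi$ at all.

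Next I would fix the given $\phi$ and pick any $\phi'\not\equiv\phi\pmod{2\pi}$. The geodesics $\gamma(\beta,\phi;\cdot)$ and $\gamma(\beta,\phi';\cdot)$ have distinct initial velocities $\cos\phi\,p_1+\sin\phi\,p_2$ and $\cos\phi'\,p_1+\sin\phi'\,p_2$ in $D(e)$, so they are genuinely different parametrized curves; being arclength-parametrized (Theorem~\ref{theor1}), each has length $t_*$ on $[0,t_*]$; and by the previous step both join $e$ to the common point $\gamma(\beta,\phi;t_*)$. Hence the hypotheses of Proposition~\ref{bz} hold, and it follows that the segment $\gamma(t)$, $0\le t\le t_*$, either is not a shortest arc or is not a part of a longer shortest arc.

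Finally I would argue by contradiction: suppose $t_0>t_*$ while $\gamma(t)$, $0\le t\le t_0$, is a shortest arc. The restriction of a shortest arc to a subinterval is again a shortest arc, so $\gamma|_{[0,t_*]}$ is a shortest arc, and it is plainly a part of the longer shortest arc $\gamma|_{[0,t_0]}$; this contradicts the dichotomy just obtained from Proposition~\ref{bz}. Therefore $t_0\le t_*=\frac{2\pi}{\sqrt{\beta^2-1}}$. The only substantive point in the whole argument is the $\phi$-independence of the endpoint at $t_*$---which is exactly what makes the second geodesic available; the rest is the restriction property of shortest arcs together with the verification (immediate from the distinct initial directions) that the two geodesics are different curves, so that Proposition~\ref{bz} applies as stated.
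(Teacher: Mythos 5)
Your argument is correct and is essentially the paper's own proof: both compute that at $t_*=\frac{2\pi}{\sqrt{\beta^2-1}}$ one has $m=0$, $n=-1$, so the endpoint (\ref{geod}) is independent of $\phi$, and then invoke Proposition~\ref{bz}. You merely spell out more explicitly the final contradiction step (restriction of a shortest arc is a shortest arc), which the paper leaves implicit.
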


\begin{proof}
In consequence of (\ref{mn3}), $m=0$ and $n=-1$ for $t=\frac{2\pi}{\sqrt{\beta^2-1}}$. Substituting these $m$ and $n$ into (\ref{geod}), we get that
\begin{equation}
\label{ss}
\gamma\left(\frac{2\pi}{\sqrt{\beta^2-1}}\right)=
\left(\begin{array}{cc}
-\cos\frac{\pi\beta}{\sqrt{\beta^2-1}} & -\sin\frac{\pi\beta}{\sqrt{\beta^2-1}} \\
\sin\frac{\pi\beta}{\sqrt{\beta^2-1}} & -\cos\frac{\pi\beta}{\sqrt{\beta^2-1}}
\end{array}\right)
\end{equation}
does not depend on $\phi.$
It remains to apply Proposition \ref{bz}.
\end{proof}

\begin{proposition}
\label{suf}
\begin{equation}
\label{subset}
K(e)\cup SO(2)-\{e\}\subset C(e).
\end{equation}
\end{proposition}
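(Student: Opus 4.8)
The plan is to use Proposition \ref{bz} as the engine: to show that a point $c$ lies in $C(e)$ it suffices to produce two \emph{distinct} geodesics of equal length joining $e$ to $c$, at least one of which is a shortest arc; then that shortest arc is not a part of a longer one, so $c\in C(e)$. Two symmetries of $(SL(2),\delta)$ supply the required pairs. The first is the conjugation by $SO(2)$ (Proposition \ref{rem3}), an isometry fixing $e$, which changes only the parameter $\phi$; it produces many geodesics with a common endpoint exactly when that endpoint does not depend on $\phi$, i.e., by formula (\ref{geod}), when $m=0$. The second is the reflection $R$ induced by conjugation by $\operatorname{diag}(1,-1)$: arguing as in Proposition \ref{rem3}, $R$ fixes $p_1$ and negates $p_2,k$, hence preserves $D(e)$ and $\langle\cdot,\cdot\rangle$ and is an isometric automorphism of $(SL(2),\delta)$, acting on geodesics by $R(\gamma(\beta,\phi;t))=\gamma(-\beta,-\phi;t)$.

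For the inclusion $SO(2)-\{e\}\subset C(e)$ I would use the geodesics with $\beta^2>1$. By (\ref{mn3}), at the time $t_0=\tfrac{2\pi}{\sqrt{\beta^2-1}}$ one has $m=0$, $n=-1$, so by (\ref{geod}) (cf. (\ref{ss})) the endpoint is $\gamma(\beta,\phi;t_0)=\exp((2\pi-2\theta)k)\in SO(2)$ with $\theta=\tfrac{\pi\beta}{\sqrt{\beta^2-1}}$, \emph{independently of} $\phi$. Thus the whole $\phi$-family of distinct geodesics of common length $t_0$ ends at this one rotation, and Proposition \ref{bz} forbids extension as soon as one of them is a shortest arc. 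It then remains to observe that, as $\beta$ runs through $(1,\infty)$, the value $\theta$ runs through $(\pi,\infty)$ and $\exp((2\pi-2\theta)k)$ sweeps out all of $SO(2)-\{e\}$; for a fixed target $c\ne e$ one selects the smallest $\theta>\pi$ with $\exp((2\pi-2\theta)k)=c$, which gives the largest $\beta$ and hence the smallest Maxwell time $t_0$, the natural candidate for the cut time.

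For the inclusion $K(e)=\Sim^-\subset C(e)$ I would first reduce to a diagonal representative. Since the $SO(2)$-conjugations are isometries fixing $e$, the set $C(e)$ is invariant under them, and by Proposition \ref{sim} together with Corollary \ref{exist} every matrix of $\Sim^-$ is $SO(2)$-conjugate to a diagonal one $\operatorname{diag}(-\lambda,-1/\lambda)$, $\lambda\ge 1$. For such a diagonal $c$ one has $R(c)=c$, so if a geodesic $\gamma(\beta,\phi;t_0)$ with $\beta\ne 0$ ends at $c$, then $\gamma(-\beta,-\phi;t_0)$ is a \emph{second}, distinct geodesic of the same length ending at $c$, and Proposition \ref{bz} again applies once optimality is known. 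Here I would check, directly from (\ref{geod}) and the trace/symmetry computations (a diagonal endpoint with $m\ne 0$ forces $\tfrac{\beta t}{2}+\phi\in\pi\mathbb{Z}$), that every value $m(c)=\Sh a\ge 0$ — equivalently every $\lambda\ge 1$, including $\lambda=1$ giving $-e$ — is actually attained by such a geodesic, so that all of $\Sim^-$ is reached.

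The main obstacle is precisely the optimality step hidden in both parts: Proposition \ref{sss} gives only the one-sided bound $t_0\le\tfrac{2\pi}{\sqrt{\beta^2-1}}$, not that the segment up to the Maxwell time is a shortest arc; indeed, for the exceptional $\beta$ with $\exp((2\pi-2\theta)k)=e$ the geodesic closes up at $e$ and is cut strictly earlier. To establish that the constructed segments are genuinely shortest up to the relevant times I would pass to the submetry $L\colon(SL(2),\delta)\to(SO_0(2,1),d)$ used in the proof of Proposition \ref{bou0}: it preserves lengths of curves, the images $L(\gamma)$ are one-parameter subgroups whose shortest-arc behaviour is known from \cite{Ber1}, and pulling this back through the two-fold cover $\{\pm e\}$ pins down $d(e,c)$ and hence the cut time. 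Alternatively, one can argue by continuity in $\beta$ from the $\beta=0$ geodesics, which are shortest for all $t$ by Proposition \ref{bou0}. Once optimality is secured, the two inclusions combine to give (\ref{subset}).
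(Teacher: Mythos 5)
Your overall strategy --- exhibit two distinct geodesics of equal length ending at $c$ and invoke Proposition \ref{bz} --- is exactly the paper's strategy, and your two symmetry mechanisms (the $\phi$-degeneracy at $m=0$ for $SO(2)-\{e\}$, and a reflection producing a twin for symmetric endpoints, which in the paper appears as the time-reversal identity $\gamma(\beta,\phi_0;T)=\gamma(\beta,\phi_1;-T)$ with $\phi_1=\beta T+\phi_0+\pi$) are the right ones. But the step you yourself flag as ``the main obstacle'' is a genuine gap, and neither of your proposed fixes closes it. The submetry $L:(SL(2),\delta)\to(SO_0(2,1),d)$ only yields $d_{SO_0(2,1)}(Le,Lc)=\min\{d(e,c),d(e,-c)\}$, so it does not by itself determine $d(e,c)$ or the cut time --- deciding which of the two preimages is closer is essentially the problem you are trying to solve; and ``continuity in $\beta$ from the $\beta=0$ case'' is not an argument, since optimality is not an open or closed condition that propagates this way (indeed, as you note, the loop phenomenon at $|\beta|=3/(2\sqrt2)$ shows the Maxwell time $2\pi/\sqrt{\beta^2-1}$ can fail badly to be optimal). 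Proving that a \emph{specific} constructed geodesic is a shortest arc up to the Maxwell time is the hard content of Theorem \ref{mai}, not something available at this stage.

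The idea you are missing is to run the argument in the opposite direction: do not construct a geodesic and then try to prove it optimal; instead, start from a shortest arc whose existence is free. Since $(SL(2),\delta)$ is a locally compact complete inner metric space, the Cohn--Vossen theorem gives \emph{some} shortest arc $\gamma(\beta,\phi_0;t)$, $0\le t\le T$, from $e$ to $c$. The endpoint constraints then force this unknown shortest arc into the degenerate family: if $c\in SO(2)-\{e\}$, then $m(T)=m(c)=0$ with $T>0$ forces $\beta^2>1$ and, by Proposition \ref{sss}, $T=2\pi/\sqrt{\beta^2-1}$, so the endpoint is $\phi$-independent and a distinct twin exists; if $c\in K(e)$, then $\operatorname{trace}(c)\le-2$ rules out $\beta=0$ via Corollary \ref{op}, and the symmetry $c_{12}=c_{21}$ together with the parities of $m,n,\sin,\cos$ produces the distinct reversed geodesic. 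In either case Proposition \ref{bz} applies to the shortest arc itself, showing it is noncontinuable, hence $c\in C(e)$ --- with no separate optimality verification and no need to identify which $\beta$ realizes the distance. Your surjectivity observations (that every rotation and every matrix of $\Sim^-$ is actually hit) then become unnecessary for this inclusion.
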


\begin{proof}
Denote by $c=(c_{ij})$ any matrix (\ref{geod}). One can easily see that
\begin{equation}
\label{sistem}
c_{11}+c_{22}=2n\cos\frac{\beta t}{2}+2\beta m\sin\frac{\beta t}{2},\quad
c_{12}-c_{21}=2n\sin\frac{\beta t}{2}-2\beta m\cos\frac{\beta t}{2},
\end{equation}
\begin{equation}
\label{sist}
c_{11}-c_{22}=2m\cos\left(\frac{\beta t}{2}+\phi\right),\quad
c_{12}+c_{21}=2m\sin\left(\frac{\beta t}{2}+\phi\right).
\end{equation}
Note that the system of equalities   (\ref{sistem}), (\ref{sist}) is equivalent to (\ref{geod}).

It follows from (\ref{sist}) and (\ref{mn1}), (\ref{mn2}), (\ref{mn3}), (\ref{mc}) that if
$\beta^2\leq 1$ and $t\geq 0$ or $\beta^2> 1$ and $0\leq t\leq \frac{2\pi}{\sqrt{\beta^2-1}}$ as in Proposition \ref{sss}, then
\begin{equation}
\label{m}
m = m(c).
\end{equation}

If $c\in SO(2)-\{e\}$ then $m(c)=0$ by Proposition \ref{so2}, and
since $(SL(2),\delta)$ is a locally compact complete space with inner metric then in consequence of the Cohn--Vossen theorem \cite{CF},  there exists a shortest arc $\gamma(\beta,\phi_0;t),$
$0\leq t \leq T,$ joining $e$ and $c.$ By (\ref{m}), it must be
$m(T)=m(c)=0.$ Then $\beta^2>1,$ $T=\frac{2\pi}{\sqrt{\beta^2-1}}$ on the ground of  (\ref{mn1}), (\ref{mn2}), (\ref{mn3}), so $\gamma(\beta,\phi;T)=\gamma(\beta,\phi_0;T)$ for all $\phi.$  Then $c\in S(e)\cap C(e).$

Let $c\in K(e).$ In consequence of the Cohn--Vossen theorem,  there exists a shortest arc
$\gamma(\beta,\phi_0;t),$\,\, $0\leq t \leq T,$\,\, joining $e$ and $c.$ Since  $\trace(c)\leq -2$ by 
(\ref{cutloc1}), then on the ground of Corollary \ref{op}, $\beta \neq 0$.
Now in consequence of the equality $c_{12}=c_{21}$ and (\ref{geod}), the evenness of functions
$n$ and $\cos,$ the oddness of functions $m$ и $\sin,$ 
$\gamma(\beta,\phi_0;T)= \gamma(\beta,\phi_1;-T),$ where
$\phi_1=\beta T + \phi_0 + \pi.$ Therefore it follows from Proposition \ref{bz} that $c\in C(e).$
\end{proof}

\begin{proposition}
\label{se}
$$S(e)=(S_1(e)=SO(2)-\{e\}) \quad\cup $$
$$\left\{\gamma(\beta,\phi;t)\mid \Tg\left(\frac{t\sqrt{\beta^2-1}}{2}\right)=\frac{t\sqrt{\beta^2-1}}{2},\,\,\beta^2>1,\,\, t\neq 0\right\};$$
$$C\cap S(e)=C\cap S_1(e)=SO(2)-\{e\}.$$
\end{proposition}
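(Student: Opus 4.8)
The plan is to describe $S(e)$ as the critical image of the sub-Riemannian exponential map $\Exp_e$ and then to intersect it with the cut locus of Theorem~\ref{cutloc}. First I would parametrize $\Exp_e$ by the covector data $(\phi,\beta)$ and the arclength $t$, so that $\Exp_e$ is carried to the map $(\phi,\beta,t)\mapsto\gamma(\beta,\phi;t)$. One checks that the Jacobian of the passage from linear covector coordinates to $(\phi,\beta,t)$ equals $t^2\neq0$, so $\gamma(\beta,\phi;t)$ is conjugate to $e$ precisely when the three vectors $\partial_\phi\gamma,\ \partial_\beta\gamma,\ \partial_t\gamma$ are linearly dependent. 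Using left-invariance I would pass to the left-logarithmic derivatives $\gamma^{-1}\partial_\bullet\gamma\in\frak{sl}(2)$ and study $D:=\Det\!\left(\gamma^{-1}\partial_\phi\gamma,\ \gamma^{-1}\partial_\beta\gamma,\ \gamma^{-1}\partial_t\gamma\right)$ in the basis $p_1,p_2,k$.

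The computation becomes tractable through the identity $\exp(tA)=n\,e+2mA$, where $A=\cos\phi\,p_1+\sin\phi\,p_2+\beta k$ and $n,m$ are as in (\ref{mn1})--(\ref{mn3}); this follows from Lemma~\ref{lem} together with $A^2=-\Det(A)\,e$ and $\Det(A)=(\beta^2-1)/4$. With it one obtains $\gamma=(n\,e+2mA)\exp(-t\beta k)$ and closed forms for $\partial_\phi\gamma,\partial_\beta\gamma,\partial_t\gamma$; factoring out the common rotation $\ad(\exp(t\beta k))$ of Proposition~\ref{rem3}, which has determinant $1$, reduces $D$ to a $3\times3$ determinant whose entries involve only $m,n$ and $\mu:=n\,m_\beta-m\,n_\beta$ (with $m_\beta=\partial_\beta m$, etc.). I expect this, on writing $\tau:=t\sqrt{\beta^2-1}/2$, to collapse to $D=\Const\cdot\sin\tau\,(\tau\cos\tau-\sin\tau)$, so that $D=0$ is equivalent to $\sin\tau=0$ or $\Tg\tau=\tau$. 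The same computation in the cases $\beta^2\le1$ (hyperbolic, resp.\ linear, $m,n$) yields a determinant that never vanishes for $t\neq0$, whence conjugate points occur only for $\beta^2>1$.

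I would then read off the two families. The condition $\sin\tau=0$ forces $m=0$, and by (\ref{geod}) the point is then $\pm$ a rotation, i.e.\ lies in $SO(2)$; since the least positive root of $\Tg\tau=\tau$ exceeds $\pi$, the smallest zero of $D$ is $\tau=\pi$, so the first conjugate points are exactly the matrices (\ref{ss}) of Proposition~\ref{sss}. Letting $\beta$ range over $\beta^2>1$ I would verify that these sweep out all of $SO(2)-\{e\}$, giving $S_1(e)=SO(2)-\{e\}$, while every higher zero $\sin\tau=0$ also lands in $SO(2)$; the zeros $\Tg\tau=\tau$, $\tau\neq0$, supply the second set in the statement, and the union is $S(e)$.

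For the intersection I would use $C(e)=K(e)\cup S_1(e)=\Sim^-\cup(SO(2)-\{e\})$. The first family contributes precisely $S_1(e)=SO(2)-\{e\}$, which lies in $C(e)$. For the second family the guiding principle is that the cut time along any geodesic does not exceed its first conjugate time $\tau=\pi$, whereas the points $\Tg\tau=\tau$ occur at $\tau>\pi$, hence strictly beyond the cut point, so the carrying geodesic is there not minimizing. Since $m(c)=|\sin\tau|/\sqrt{\beta^2-1}\neq0$ on this family, Proposition~\ref{so2} already rules out $SO(2)$. I expect the \emph{main obstacle} to be excluding $\Sim^-$: by (\ref{sistem}) such a point is symmetric iff $\Tg(\beta t/2)=\beta t/2$, a coincidence realized for special $\beta$, and one must then control the sign of $\Trace(c)=2\cos\tau\bigl(\cos(\beta t/2)+(\beta t/2)\sin(\beta t/2)\bigr)$ to rule out the branch $\Trace(c)\le-2$. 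Equivalently, one must confirm that a conjugate point lying beyond the cut time of its own geodesic is never a cut point reached by some other minimizing geodesic; this trace-sign analysis is the delicate step on which the identity $C\cap S(e)=SO(2)-\{e\}$ rests.
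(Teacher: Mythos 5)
Your computation of $S(e)$ is essentially the paper's. The paper likewise expands $\gamma(\beta,\phi;t)$ over $p_1,p_2,k,e$, differentiates in $(\phi,t,\beta)$, and finds that the differential degenerates (for $t\neq0$) exactly when $m=0$ or $2m-tn=0$ with $m\neq0$, $\beta^2\neq1$ --- i.e. $\sin\tau=0$ or $\Tg\tau=\tau$ for $\tau=t\sqrt{\beta^2-1}/2$ --- with no degeneration when $\beta^2\le1$. Your identity $\exp(tA)=n\,e+2mA$ and the resulting factorization of the Jacobian agree with this; only note that at $\beta^2=1$ the naive product $m(2m-tn)$ vanishes identically, so that case must be handled with the correct limiting formula for $\partial_\beta\gamma$ (as the paper does), which is nondegenerate for $t\neq 0$.

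The gap is in the intersection statement, and the route you sketch for closing it would fail. You propose to show that the second family $\{\Tg\tau=\tau,\ \tau\neq0\}$ is disjoint from $K(e)=\Sim^-$ by a trace-sign analysis. It is not disjoint: take two distinct positive roots $y_i<y_j$ of $\Tg y=y$ with $\cos y_i\cos y_j<0$ (e.g. $y_1\approx 4.493$, $y_2\approx 7.725$), and set $\beta^2=y_j^2/(y_j^2-y_i^2)$, $t=2y_i/\sqrt{\beta^2-1}$, so that $\tau=y_i$ and $\beta t/2=y_j$. Then $\beta m=(\beta t/2)n=y_j\cos y_i$, and by (\ref{sistem}) the resulting point is symmetric, since $c_{12}-c_{21}=2\cos y_i\cos y_j(\Tg y_j-y_j)=0$, while $\trace(c)=2\cos y_i\cos y_j(1+y_j^2)=2\,\sgn(\cos y_i\cos y_j)\sqrt{(1+y_j^2)/(1+y_i^2)}<-2$; hence the point lies in $\Sim^-=K(e)$. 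So a purely set-theoretic exclusion of the second family from $C(e)$ is not available, and the "delicate trace-sign step" you defer cannot be carried out in the direction you hope. What the paper actually uses --- and what is missing from your write-up --- is Proposition \ref{sss}: a shortest arc with $\beta^2>1$ has $T\le 2\pi/\sqrt{\beta^2-1}$, i.e. $\tau\le\pi$, whereas the second family occurs only at $|\tau|>\pi$ (the least positive root of $\Tg\tau=\tau$ exceeds $\pi$). Thus along the geodesic on which such a point is conjugate it lies strictly beyond the cut point, and the only conjugate points reached by noncontinuable shortest arcs along their own geodesics are the first conjugate points at $\tau=\pi$, which form $SO(2)-\{e\}$ and lie in $C(e)$ by Proposition \ref{suf}. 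This is the sense in which the paper derives $C\cap S(e)=SO(2)-\{e\}$ from Propositions \ref{sss} and \ref{suf}, and it is the ingredient your argument needs in place of the disjointness claim.
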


\begin{proof}
On the ground of (\ref{mn1}), (\ref{mn2}), (\ref{mn3}), we get
$$\gamma(\beta,\phi;t)=2m\left[\cos\left(\frac{\beta t}{2}+\phi\right)p_1+\sin\left(\frac{\beta t}{2}+\phi\right)p_2\right]+ $$
$$2\left(\beta m\cos\frac{\beta t}{2}-n\sin\frac{\beta t}{2}\right)k+
\left(\beta m\sin\frac{\beta t}{2}+n\cos\frac{\beta t}{2}\right)e;$$
$$m'_t=\frac{n}{2}, \quad n'_t=\frac{1-\beta^2}{2}m;$$
$$m'_{\beta}= \frac{\beta}{1-\beta^2}\left(m-\frac{tn}{2}\right),\quad n'_{\beta}=-\frac{\beta t}{m},\quad \beta^2\neq 1;$$
$$m'_{\beta}=n'_{\beta}= 0,\quad \beta^2 = 1.$$

Using these relations, one can easily compute that
$$\gamma'_{\phi}=2m\left[-\sin\left(\frac{\beta t}{2}+\phi\right)p_1+
\cos\left(\frac{\beta t}{2}+\phi\right)p_2\right],$$
$$\gamma'_{t}=\frac{\beta}{2}\gamma'_{\phi}+n\left[\cos\left(\frac{\beta t}{2}+\phi\right)p_1+
\sin\left(\frac{\beta t}{2}+\phi\right)p_2\right]+
m\left(-\sin\frac{\beta t}{2}k + \frac{1}{2}\cos\frac{\beta t}{2}e\right)$$
for all $\beta\in\mathbb{R};$\quad if $\beta^2\neq 1$ then
$$\gamma'_{\beta}=\frac{t}{2}\gamma'_{\phi}+$$
$$\frac{2}{1-\beta^2}
\left(m-\frac{tn}{2}\right)\left\{\beta\left[\cos
\left(\frac{\beta t}{2}+\phi\right)p_1+
\sin\left(\frac{\beta t}{2}+\phi\right)p_2\right]+
\cos\frac{\beta t}{2}k + \frac{1}{2}\sin\frac{\beta t}{2}e\right\};$$
if $\beta^2=1$ then
$$\gamma'_{\beta}=\frac{t}{2}\gamma'_{\phi}+\frac{t^2}{4}\left(-2\sin\frac{\beta t}{2}k+ \cos\frac{\beta t}{2}e\right).$$
Besides the value $t=0,$ we get critical values only for $m=0$ or
$2m-tn=0,$ when $m\neq 0$ and $\beta^2\neq 1.$ This and the proof of Proposition \ref{sss} inmply the disjunction of the union, the inequality $|t|>2\pi/\sqrt{\beta^2-1}$ for points of the second set of the union, and the first statement. Now the second statement follows from Propositions \ref{sss}, \ref{suf}.
\end{proof}

\begin{theorem}
\label{con}
If $c\in C(e)$ for $(SL(2),\delta)$ then $c\in S_1(e)$ or there exist
$\beta_i$, $\phi_i\in\mathbb{R}$, $i=1,2$, $T>0$, such that
\begin{equation}
\label{TTT}
c=\gamma(\beta_1,\phi_1;T)=\gamma(\beta_2,\phi_2;T),
\end{equation}
where $T=T(\beta_1,\phi_1)$ is the smallest positive number for which there exist
$\beta_2$, $\phi_2\in\mathbb{R}$ such that the equality (\ref{TTT}) holds and geodesics $\gamma_1=\gamma(\beta_1,\phi_1;t)$ and  $\gamma_2=\gamma(\beta_2,\phi_2;t)$ are different.
\end{theorem}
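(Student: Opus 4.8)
The plan is to establish, in the present sub-Riemannian setting, the classical cut-point dichotomy: a cut point is reached either by a unique minimizing geodesic, in which case it is a first conjugate point, or else by at least two distinct minimizing geodesics of the same length. Let $c\in C(e)$. Since $(SL(2),\delta)$ is a complete locally compact space with inner metric, the Cohn--Vossen theorem \cite{CF} furnishes a shortest arc $\gamma_1=\gamma(\beta_1,\phi_1;t)$, $0\le t\le T$, with $\gamma_1(T)=c$ and $T=d(e,c)$; by Definition \ref{cutl} this arc is noncontinuable beyond $c$, so $d(e,\gamma_1(T'))<T'$ for every $T'>T$, using Corollary \ref{cor} to extend $\gamma_1$ past $T$.

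First I would produce a second minimizing geodesic by a limiting argument. Choose $s_j\downarrow 0$ and, again by Cohn--Vossen, shortest arcs $\sigma_j=\gamma(\beta_j,\phi_j;t)$, $0\le t\le \ell_j$, joining $e$ to $\gamma_1(T+s_j)$, where $\ell_j=d(e,\gamma_1(T+s_j))<T+s_j$ and $\ell_j\to T$. Every minimizer is one of the normal geodesics (\ref{sol}) with parameters $(\phi_j,\beta_j)\in S^1\times\mathbb{R}$; after verifying that these parameters stay bounded, so that a subsequence converges, the arcs $\sigma_j$ tend uniformly to a geodesic $\gamma_2=\gamma(\beta_2,\phi_2;t)$ on $[0,T]$ with $\gamma_2(T)=c$, which is therefore itself a shortest arc from $e$ to $c$.

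The dichotomy then turns on whether $\gamma_2$ coincides with $\gamma_1$. If $\gamma_2\ne\gamma_1$, then $c=\gamma(\beta_1,\phi_1;T)=\gamma(\beta_2,\phi_2;T)$ with distinct geodesics, and it remains only to check that $T$ is the smallest such time. If $\gamma_1$ already met some distinct geodesic $\gamma(\beta',\phi';t)$ at a time $T''$ with $0<T''<T$, then the two equal-length arcs $\gamma_1|_{[0,T'']}$ and $\gamma(\beta',\phi';\cdot)|_{[0,T'']}$ would join $e$ to $\gamma_1(T'')$; but $\gamma_1|_{[0,T'']}$ is a shortest arc \emph{and} is part of the longer shortest arc $\gamma_1|_{[0,T]}$, contradicting Proposition \ref{bz}. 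Hence $T=T(\beta_1,\phi_1)$ is minimal, giving the second alternative. If instead $\gamma_2=\gamma_1$, I claim $c\in S_1(e)$: were $c$ not a first conjugate point, the sub-Riemannian exponential map $\Exp_e$ on $D(e)\times\Ann(D(e))$ (see \cite{VG}) would be a local diffeomorphism near the datum carrying $e$ to $\gamma_1(T)$; then the two data sending $e$ to the single point $\gamma_1(T+s_j)$ — the one along the extension of $\gamma_1$ at time $T+s_j$ and the one along $\sigma_j$ at time $\ell_j$, both converging to the datum of $\gamma_1$ at time $T$ — would coincide for large $j$, forcing $\ell_j=T+s_j$ and contradicting $\ell_j<T+s_j$. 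Thus $c$ is conjugate at time $T$; since the cut time never exceeds the first conjugate time, $c\in S_1(e)$.

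The main obstacle is the limiting step together with the non-conjugate subcase. One must guarantee that the parameters $(\phi_j,\beta_j)$ of the minimizers $\sigma_j$ remain bounded so that a convergent subsequence exists (compactness of the family of minimizers of bounded length with endpoints in a compact set), and one must turn the informal assertion ``a unique minimizer forces a conjugate point'' into the precise local-diffeomorphism argument for $\Exp_e$, which is available here precisely because $(SL(2),\delta)$ has no abnormal geodesics. The remaining verifications — the minimality of $T$ and the coincidence relations for $\gamma_1$ — are immediate from Proposition \ref{bz} and the explicit form (\ref{sol}).
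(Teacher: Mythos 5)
Your proof is correct in substance and its engine is the same as the paper's: the Cohn--Vossen theorem produces shortest arcs to the points $\gamma_1(T+s_j)$ just beyond the cut point, a compactness argument on the parameters yields a limiting shortest arc to $c$, and the local-diffeomorphism property of $(\beta,\phi,t)\mapsto\gamma(\beta,\phi;t)$ away from critical points drives the dichotomy. The differences are in how the two branches are closed. The paper splits at the outset into $c\in S(e)$ and $c\notin S(e)$: in the first case it concludes $c\in S_1(e)$ directly from Proposition \ref{se}, where $S(e)$ and $C(e)\cap S(e)=C(e)\cap S_1(e)=SO(2)-\{e\}$ have already been computed explicitly; in the second case the inverse function theorem forces the approximating parameters out of a neighbourhood of $(\beta_1,\phi_1,T)$, so the limit geodesic is automatically different from $\gamma_1$. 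You instead take the limit first and, when the limit coincides with $\gamma_1$, appeal to the general principle that the cut time never exceeds the first conjugate time; that principle is true here (no abnormal geodesics), but you assert it rather than derive it, whereas in this paper the clean way to finish that branch is simply to cite Proposition \ref{se}. On the other hand, you make explicit two points the paper leaves implicit: the minimality of $T$, which you correctly extract from Proposition \ref{bz} (note that the restrictions to $[0,T'']$ of two distinct geodesics are themselves distinct by the uniqueness in Corollary \ref{cor}, so an earlier meeting would contradict $\gamma_1|_{[0,T'']}$ being part of the longer shortest arc $\gamma_1|_{[0,T]}$), and the boundedness of the parameters $\beta_j$, which follows from Proposition \ref{sss}: if $\beta_j^2>1$ then the length of a shortest arc is at most $2\pi/\sqrt{\beta_j^2-1}$, so $\beta_j\to\infty$ would force $\ell_j\to 0$, contradicting $\ell_j\to T>0$, while the $\phi_j$ live on a circle. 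With the appeal to ``cut time $\leq$ first conjugate time'' replaced by a citation of Proposition \ref{se} (or supplied with a proof), your argument is complete and, on the minimality of $T$, somewhat more self-contained than the paper's.
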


\begin{proof}
Assume that $c=\gamma(\beta_1,\phi_1;T)\in C(e)-S(e),$\,\,$T> 0.$  Then
for every $n\in \mathbb{N},$ geodesic segment $\gamma(\beta_1,\phi_1;t),$
$0\leq t \leq T+1/n,$ isn't a shortest arc, and by the inverse map theorem, the map
$\gamma: (\beta,\phi,t)\rightarrow \gamma(\beta,\phi;t)$ is a diffeomorphism
in some neighbourhood $U$ of the point $(\beta_1,\phi_1,T).$ By the Cohn--Vossen theorem \cite{CF},
for sufficiently large numbers $n$ there exists a shortest arc 
$\gamma_n(t):=\gamma(\beta_n,\phi_n;t),$ $0\leq t \leq T_n,$ where
$T-1/n\leq T_n < T+1/n$ and $(\beta_n,\phi_n + 2\pi l,T_n)\notin U$ for all
$l\in \mathbb{Z},$ joining points $e$ and $\gamma(\beta_1,\phi_1;T+1/n).$
By the same reason there exist a subsequence $\{n_j\}_{j\in \mathbb{N}}$ and $(\beta_2,\phi_2)$ such that shortest arcs
$\gamma_{n_j}(t):=\gamma(\beta_{n_j},\phi_{n_j};t),$ $0\leq t \leq T_{n_j},$ converge to shortest arc  
$\gamma(\beta_2,\phi_2;t),$ $0\leq t \leq T,$ joining $e$ and $\gamma(\beta_1,\phi_1;T),$\,\, moreover, geodesics 
$\gamma_1=\gamma(\beta_1,\phi_1;t)$ and $\gamma_2=\gamma(\beta_2,\phi_2;t)$ are different. If $c\in S(e)$ then
$c\in S_1(e)$ in consequence of Proposition \ref{se}.
\end{proof}

\begin{remark}
This theorem and its proof extend to any homogeneous (sub)--Riemannian manifold
(with no strictly abnormal geodesic) only by changing notation.
\end{remark}

\begin{proposition}
\label{kk1}
If $c\in C(e)$ then $c\in S_1(e)$ or there exist
$\phi_1$, $\phi_2'\in\mathbb{R}$, $\beta>0$ such that
\begin{equation}
\label{equat}
c=\gamma(\beta,\phi_1;T)=\gamma(\beta,\phi_2';-T),
\end{equation}
where $\gamma(t)$ is defined by (\ref{geod}) and
\begin{equation}
\label{zT}
T=\min\{t>0\mid \gamma(\beta,\phi_1;t)=\gamma(\beta,\phi_2';-t)\}.
\end{equation}
\end{proposition}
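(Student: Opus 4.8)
The plan is to feed the two distinct minimizers produced by Theorem~\ref{con} into a monotonicity argument that forces them to have opposite values of $\beta$, and then to read off (\ref{equat})--(\ref{zT}) from Remark~\ref{rem2}. If $c\in S_1(e)$ the first alternative of the statement holds, so assume $c\notin S_1(e)$. Since $e\notin C(e)$ and $S_1(e)=SO(2)-\{e\}$, the matrix $c$ lies neither in $\{e\}$ nor in $SO(2)$, so $m(c)>0$ by Proposition~\ref{so2}; moreover $c\notin S(e)$ by Proposition~\ref{se}. Hence Theorem~\ref{con} supplies $\beta_1,\phi_1,\beta_2,\phi_2$ and the first coincidence time $T>0$ with $c=\gamma(\beta_1,\phi_1;T)=\gamma(\beta_2,\phi_2;T)$ and the two geodesics distinct. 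Because that partner geodesic is obtained as a limit of shortest arcs it is itself a shortest arc, so $\delta(e,c)=T$, and therefore both segments $\gamma(\beta_i,\phi_i;t)$, $0\le t\le T$, are shortest arcs.

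Next I would record the scalar consequences of the matrix equality. Comparing (\ref{sist}) with (\ref{mc}) gives $(c_{11}-c_{22})^2+(c_{12}+c_{21})^2=4m^2$, i.e. $m(c)=|m(\beta_i,T)|$; since each segment is a shortest arc, Proposition~\ref{sss} keeps $\tfrac{T\sqrt{\beta_i^2-1}}{2}$ inside $(0,\pi)$ when $\beta_i^2>1$, so in fact $m(\beta_1,T)=m(\beta_2,T)=m(c)>0$, which is exactly (\ref{m}). Two further reductions are immediate: $\beta_1\neq0$, for otherwise $c=\gamma(0,\phi_1;T)\in\Sim^+$ by Corollary~\ref{op}, contradicting $C(e)\cap\Sim^+=\emptyset$ (Proposition~\ref{simnot}); and $\beta_1\neq\beta_2$, for otherwise (\ref{sist}) with $m(c)>0$ would force $\phi_1\equiv\phi_2\pmod{2\pi}$ and the two geodesics to coincide.

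The heart of the argument is to upgrade $m(\beta_1,T)=m(\beta_2,T)$ to $\beta_2=-\beta_1$, and I expect this to be the main obstacle. I would prove that $\beta\mapsto m(\beta,T)$ is, for fixed $T$, a strictly monotone function of $|\beta|$ on the range available to shortest arcs. On $\beta^2\le1$ formula (\ref{mn2}) reads $m=\tfrac{T}{2}\,\tfrac{\sinh w}{w}$ with $w=\tfrac{T\sqrt{1-\beta^2}}{2}$, and since $\tfrac{\sinh w}{w}$ increases in $w$ while $w$ decreases in $|\beta|$, $m$ decreases strictly from $\sinh\tfrac{T}{2}$ at $\beta=0$ to $\tfrac{T}{2}$ at $\beta^2=1$; on the shortest-arc part of $\beta^2>1$ formula (\ref{mn3}) reads $m=\tfrac{T}{2}\,\tfrac{\sin w}{w}$ with $w=\tfrac{T\sqrt{\beta^2-1}}{2}\in(0,\pi)$, hence $m$ decreases strictly from $\tfrac{T}{2}$ to $0$. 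Thus $m$ takes values in $[\tfrac{T}{2},\sinh\tfrac{T}{2}]$ on the first regime and in $(0,\tfrac{T}{2})$ on the second, the two overlapping only at $\tfrac{T}{2}$, and $m$ is injective in $|\beta|$ within each regime. Consequently the common value $m(c)$ puts $\beta_1,\beta_2$ into the same regime and yields $|\beta_1|=|\beta_2|$; together with $\beta_1\neq\beta_2$ this gives $\beta_2=-\beta_1$. The delicate point is precisely the appeal to Proposition~\ref{sss}, which confines the oscillatory branch to the interval on which $\tfrac{\sin w}{w}$ is monotone.

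It remains to package the conclusion. Relabelling the two geodesics if necessary (both realize the same cut time $T=\delta(e,c)$, so either may play the role of the reference geodesic), I may assume $\beta:=\beta_1>0$. Remark~\ref{rem2} rewrites the second minimizer as $\gamma(\beta_2,\phi_2;T)=\gamma(-\beta,\phi_2;T)=\gamma(\beta,\phi_2';-T)$ with $\phi_2':=\phi_2\pm\pi$, so that $c=\gamma(\beta,\phi_1;T)=\gamma(\beta,\phi_2';-T)$, which is (\ref{equat}). For (\ref{zT}) I would argue by minimality: the value $t=T$ belongs to $\{t>0\mid\gamma(\beta,\phi_1;t)=\gamma(\beta,\phi_2';-t)\}$, while any $t'$ in this set with $0<t'<T$ would, again by Remark~\ref{rem2}, make the distinct geodesics $\gamma(\beta,\phi_1;\cdot)$ and $\gamma(-\beta,\phi_2'\pm\pi;\cdot)$ coincide at time $t'$, contradicting the minimality of the coincidence time $T$ from Theorem~\ref{con}. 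Hence the minimum equals $T$, completing the proof.
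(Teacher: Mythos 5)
Your proposal is correct and follows essentially the same route as the paper: both derive $m(\beta_1,T)=m(\beta_2,T)$ from Theorem~\ref{con} and (\ref{m}), use the monotonicity of $\Sh x/x$ and of $\sin x/x$ on $(0,\pi]$ (the latter confined by Proposition~\ref{sss}) to force $\beta_1^2=\beta_2^2$, dispose of $\beta_1=\beta_2$ via coincidence of geodesics or $m=0$ (i.e.\ $c\in S_1(e)$), and convert $\beta_2=-\beta_1$ into (\ref{equat}) by Remark~\ref{rem2}. The only cosmetic difference is that the paper normalizes to $\tfrac{2}{T}m(\beta,T)$ and compares the disjoint ranges $(1,+\infty)$, $\{1\}$, $[0,1)$, whereas you phrase the same fact as strict monotonicity of $m$ in $|\beta|$; your explicit verification of the minimality claim (\ref{zT}) is a welcome addition the paper leaves implicit.
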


\begin{proof}
Suppose that $c\in C(e)-S_1(e).$ Theorem \ref{con}, (\ref{TTT}), and (\ref{m}) imply that
\begin{equation}
\label{q1}
\frac{2}{T} m(\beta_1,T)=\frac{2}{T}m(\beta_2,T),
\end{equation}
where $m=m(\beta,t)$ is defined by (\ref{mn1})--(\ref{mn3}). But
\begin{equation}
\label{q2}
\frac{2}{T}m(\beta,T)=
\left\{\begin{array}{c}
1,\quad\mbox{если}\quad\beta^2=1, \\
\frac{\Sh x}{x},\quad\mbox{if}\quad\beta^2<1\,\,\mbox{and}\,\,x=\frac{T\sqrt{1-\beta^2}}{2}, \\
\frac{\sin x}{x},\quad\mbox{if}\quad\beta^2>1\,\,\mbox{and}\,\,x=\frac{T\sqrt{\beta^2-1}}{2}. \\
\end{array}\right.
\end{equation}

One can easily see that the following lemmas are valid.

\begin{lemma}
\label{lem2}
Function $y=\frac{\Sh x}{x}$, $x>0,$ increases and its range is interval $(1,+\infty).$
\end{lemma}

\begin{lemma}
\label{lem3}
Function $y=\frac{\sin x}{x}$, defined on segment $\left(0,\pi\right]$ increases and its range is interval  
$\left[0,1\right)$.
\end{lemma}

It follows from (\ref{q2}), Lemmas \ref{lem2} and \ref{lem3} that (\ref{q1}) holds only in the case $\beta_1^2=\beta_2^2$.

Let $\beta_1=\beta_2=\beta$. Then on the ground of (\ref{sistem}), (\ref{sist}), the equality (\ref{TTT}) holds in the case $\phi_{2}=\phi_{1}+2\pi l$,
$l\in\mathbb{Z}$ (i.e. the corresponding geodesics coincide), or in the case $m=0$ (i.e. $\gamma(\beta,\phi_1;T)\in SO(2)-\{e\}= S(e)\cap C(e)$ in consequence of (\ref{m}) and Propositions \ref{so2}, \ref{se}).

If $\beta_2=-\beta_1\neq 0$ (see Proposition \ref{bou0}) then by Remark \ref{rem2}, the equality  (\ref{TTT}) is equivalent to the equality  (\ref{equat}).
\end{proof}

\begin{proposition}
\label{incl}
\begin{equation}
\label{sub}
C(e)-S_1(e)\subset K(e).
\end{equation}
\end{proposition}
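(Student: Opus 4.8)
The plan is to start from the two--geodesic representation furnished by Proposition~\ref{kk1} and then identify the resulting matrix as a symmetric one of the required sign via Proposition~\ref{simnot}. So I would fix an arbitrary $c\in C(e)-S_1(e)$. By Proposition~\ref{kk1} there are $\beta>0$ and $\phi_1,\phi_2'\in\mathbb{R}$ with
$$c=\gamma(\beta,\phi_1;T)=\gamma(\beta,\phi_2';-T),\qquad T>0,$$
so that $c$ is the common endpoint of two geodesics which, up to the sign of the time parameter, share the same $\beta$. Everything now hinges on comparing the two matrix representations of $c$ coming from these two geodesics.

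The key step is to read off the antisymmetric part of $c$ from both representations. By the second relation in (\ref{sistem}),
$$c_{12}-c_{21}=2n\sin\tfrac{\beta t}{2}-2\beta m\cos\tfrac{\beta t}{2}=:f(\beta,t),$$
and this expression does not involve $\phi$ at all. Since $n=n(\beta,t)$ is even and $m=m(\beta,t)$ is odd in $t$ (this holds uniformly across (\ref{mn1})--(\ref{mn3})), the function $t\mapsto f(\beta,t)$ is odd, i.e. $f(\beta,-T)=-f(\beta,T)$. Evaluating the first representation at $t=T$ and the second at $t=-T$ therefore gives $c_{12}-c_{21}=f(\beta,T)$ and simultaneously $c_{12}-c_{21}=f(\beta,-T)=-f(\beta,T)$; hence $c_{12}-c_{21}=0$, that is, $c$ is symmetric and $c\in\Sim$. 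The relations in (\ref{sist}) then merely fix $\phi_2'$ in terms of $\phi_1$ and impose no further constraint.

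It remains to pin down the sign of the trace. Since $c\in SL(2)$ is symmetric, Proposition~\ref{sim}(1) places $c$ in the disjoint union $\Sim^+\cup\Sim^-$, separated by the sign of $\trace(c)$. But $c\in C(e)$, while $C(e)\cap\Sim^+=\emptyset$ by Proposition~\ref{simnot}; therefore $c\notin\Sim^+$ and so $c\in\Sim^-=K(e)$ by (\ref{cutloc1}). This is exactly the inclusion (\ref{sub}), and the proof is complete.

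The genuinely substantive work has already been absorbed into Proposition~\ref{kk1} (the Cohn--Vossen limiting argument producing a second minimizing geodesic with the opposite sign of $t$ and the \emph{same} $\beta$) and into Proposition~\ref{simnot} (which rules out $\Sim^+$). Granting these, the argument is short. The only point demanding care is the cancellation forcing $c_{12}=c_{21}$: it relies precisely on the $\phi$--independence of the off-diagonal antisymmetric combination together with the parity of the coefficient functions $m$ and $n$, so that no case distinction on the size of $\beta$ is required. I expect the main (minor) obstacle to be bookkeeping these parities correctly, rather than any new estimate.
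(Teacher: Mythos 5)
Your proof is correct and follows essentially the same route as the paper: both use Proposition~\ref{kk1} to write $c=\gamma(\beta,\phi_1;T)=\gamma(\beta,\phi_2';-T)$, observe from (\ref{sistem}) that $c_{12}-c_{21}$ is $\phi$--independent and odd in $t$ (since $n$ is even and $m$ is odd), conclude $c_{12}=c_{21}$, and then combine Propositions~\ref{sim} and~\ref{simnot} to place $c$ in $\Sim^-=K(e)$. The paper's version is just a terser statement of the same argument.
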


\begin{proof}
Let $c\in C(e)-S_1(e)$. Since $c_{12}-c_{21}$ is an odd function relative to $t$ on the ground of (\ref{sistem}), 
then $c_{12}=c_{21}$ by Proposition \ref{kk1}. Now inclusion (\ref{sub}) follows from Propositions
\ref{simnot} and \ref{sim}.
\end{proof}

All statements  of Theorem \ref{cutloc}, except for the last one, follow from Propositions \ref{suf}, \ref{se}, \ref{kk1}, \ref{incl}; the last statement follows from (\ref{sol}), (\ref{simm}), Proposition \ref{bou0}, Corollary \ref{op}.

\section{Noncontinuable shortest arcs on $(SL(2),\delta)$}

The following theorem constitutes the main result of this section. 

\begin{theorem}
\label{mai}
Let $\beta\neq 0$ and $\gamma=\gamma(\beta, \phi; t)$, $0\leq t\leq T$, be a noncontinuable shortest arc (\ref{geod}). Then

1) If $\mid\beta\mid\geq\frac{2}{\sqrt{3}}$ then $T=\frac{2\pi}{\sqrt{\beta^2-1}}$.

2) If $\beta^2=1$ then $T\in (2\pi,3\pi)$ and $T$ satisfies the system of equations
$$\cos\frac{T}{2}=\frac{-1}{\sqrt{1+(T/2)^2}},\quad \sin\frac{T}{2}=\frac{-T/2}{\sqrt{1+(T/2)^2}}.$$

3) If $\beta^2<1$ then $T\in\left(\frac{2\pi}{\mid\beta\mid},\frac{3\pi}{\mid\beta\mid}\right)$ and $T$ satisfies the system of equations

$$\cos kx=\frac{-1}{\sqrt{1+k^2\Th^2x}},\quad \sin kx=\frac{-k \Th x}{\sqrt{1+k^2\Th^2x}},$$
where
\begin{equation}
\label{kx}
k=\frac{\mid\beta\mid}{\sqrt{1-\beta^2}},\quad x=\frac{T\sqrt{1-\beta^2}}{2}= \frac{T}{2\sqrt{1+k^2}}.
\end{equation}

4) If $\mid\beta\mid=\frac{3}{2\sqrt{2}}$ then $T=2\sqrt{2}\pi$.

5) If $\frac{3}{2\sqrt{2}}<\mid\beta\mid<\frac{2}{\sqrt{3}}$ then
$\frac{3\pi}{\mid\beta\mid}< T<2\pi\left(\mid\beta\mid+\sqrt{\beta^2-1}\right)< \frac{4\pi}{\mid\beta\mid}$
and  $T$ satisfies the system of equations
$$\cos kx=\frac{1}{\sqrt{1+k^2\Tg^2x}},\quad
\sin kx=\frac{k\Tg x}{\sqrt{1+k^2\Tg^2 x}}<0,$$
where
\begin{equation}
\label{xk}
k=\frac{\mid\beta\mid}{\sqrt{\beta^2-1}}, \quad x=\frac{T\sqrt{\beta^2-1}}{2}=\frac{T}{2\sqrt{k^2-1}}.
\end{equation}

6) If $1<\mid\beta\mid<\frac{3}{2\sqrt{2}}$ then
$\frac{2\pi}{\mid\beta\mid}<2\pi\left(\mid\beta\mid+\sqrt{\beta^2-1}\right)<T<\frac{3\pi}{\mid\beta\mid}$
and $T$ satisfies the system of equations
$$\cos kx=\frac{-1}{\sqrt{1+k^2\Tg^2x}},\quad
\sin kx=\frac{-k\Tg x}{\sqrt{1+k^2\Tg^2 x}}<0,$$
where $k$ and $x$ are defined by formulae (\ref{xk}).

\end{theorem}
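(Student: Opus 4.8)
The plan is to combine Theorem~\ref{cutloc} with the algebraic description of the endpoint. Since $C(e)=K(e)\cup S_1(e)$ with $K(e)=\Sim^-$ and $S_1(e)=SO(2)-\{e\}$, and a noncontinuable shortest arc is precisely a shortest arc terminating at a cut point, the number $T$ is the first positive time at which $\gamma(\beta,\phi;t)$ meets $C(e)$; by Remark~\ref{rem2} I may take $\beta>0$, so everything depends on $|\beta|$ only. Reaching $S_1(e)$ forces $m(c)=0$ (Proposition~\ref{so2}), which by (\ref{mn1})--(\ref{mn3}) is possible only for $\beta^2>1$, first at $t=\frac{2\pi}{\sqrt{\beta^2-1}}$ (cf.\ (\ref{ss})). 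Reaching $K(e)$ means the endpoint is symmetric with $\trace\le-2$. Using (\ref{sistem}),(\ref{sist}) I would write $\frac{\trace(c)}{2}=n\cos\frac{\beta t}{2}+\beta m\sin\frac{\beta t}{2}$ and $\frac{c_{12}-c_{21}}{2}=n\sin\frac{\beta t}{2}-\beta m\cos\frac{\beta t}{2}$, and, using the identity $n^2+\beta^2m^2=1+m^2$ (immediate from (\ref{mn1})--(\ref{mn3})), rewrite them as $R\cos(\psi-\chi)$ and $R\sin(\psi-\chi)$ with $\psi=\frac{\beta t}{2}$, $R=\sqrt{1+m^2}\ge1$, $\cos\chi=n/R$, $\sin\chi=\beta m/R$. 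Then symmetry is $\sin(\psi-\chi)=0$, while $\trace\le-2$ selects the branch $\psi-\chi\in\pi+2\pi\mathbb{Z}$ (so that $\trace/2=-R\le-1$).

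Next I would prove that $\psi-\chi$ increases monotonically from $0$. Differentiating $\tan\chi=\beta m/n$ and using $\cos^2\chi=n^2/R^2$ gives $\frac{d\chi}{dx}=\frac{k}{1+m^2}$, where $x$ and $k$ are as in (\ref{kx}),(\ref{xk}) and $\psi=kx$; hence $\frac{d}{dx}(\psi-\chi)=\frac{km^2}{1+m^2}\ge0$, with equality only at $m=0$. Since the only symmetric point met right after the origin is at $\psi-\chi=\pi$ (the value $0$ lies at $t=0$, and $\Sim^+$ is not a cut point by Proposition~\ref{simnot}), the first passage through $K(e)$ occurs at the unique $x^\ast$ with $\psi=\pi+\chi$, i.e.\ $\cos\psi=-\cos\chi=-n/R$ and $\sin\psi=-\sin\chi=-\beta m/R$. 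Substituting the explicit $m,n,R$ expressed through $k,x$ (with $\Th x$ when $\beta^2<1$ and $\Tg x$ when $\beta^2>1$, and $kx=\psi=\frac{|\beta|T}{2}$) turns this pair into exactly the systems displayed in cases~2)--6).

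The case division is then produced by locating $x^\ast$ and comparing it with the $SO(2)$-time. For $\beta^2>1$, at $x=\pi$ one has $\chi=\pi$, so $\psi-\chi=(k-1)\pi$; thus $x^\ast\le\pi$ (the $K(e)$-cut comes first) iff $k\ge2$ iff $|\beta|\le\frac{2}{\sqrt3}$, which yields case~1) (for $|\beta|\ge\frac{2}{\sqrt3}$ the $SO(2)$-cut wins, $T=\frac{2\pi}{\sqrt{\beta^2-1}}$). Within $1<|\beta|<\frac{2}{\sqrt3}$ the sign of $\cos\psi=-\cos x^\ast$ is governed by $x^\ast\lessgtr\pi/2$; solving $\psi-\chi=\pi$ at $x=\pi/2$ gives $\psi=3\pi/2$, $k=3$, i.e.\ $|\beta|=\frac{3}{2\sqrt2}$, so $T=\frac{\pi}{\sqrt{\beta^2-1}}=2\sqrt2\,\pi$ (case~4). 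Hence $k>3$ (i.e.\ $1<|\beta|<\frac{3}{2\sqrt2}$) gives $x^\ast<\pi/2$, $\cos\psi<0$, $\chi\in(0,\pi/2)$, $\psi\in(\pi,\frac{3\pi}{2})$ (case~6), while $2<k<3$ (i.e.\ $\frac{3}{2\sqrt2}<|\beta|<\frac{2}{\sqrt3}$) gives $x^\ast>\pi/2$, $\cos\psi>0$, $\chi\in(\frac{\pi}{2},\pi)$, $\psi\in(\frac{3\pi}{2},2\pi)$ (case~5); the coarse bounds $T\in(\frac{2\pi}{|\beta|},\frac{3\pi}{|\beta|})$ and $T\in(\frac{3\pi}{|\beta|},\frac{4\pi}{|\beta|})$ follow from $\psi=\frac{|\beta|T}{2}$. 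The cases $\beta^2=1$ and $\beta^2<1$ are simpler: there $n>0$ and $m>0$ for $t>0$, so $SO(2)$ is never met, $\chi\in(0,\pi/2)$ always and $\psi\in(\pi,\frac{3\pi}{2})$, giving cases~2) and~3) directly, with $\Tg\frac{T}{2}=\frac{T}{2}$ for $\beta^2=1$.

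The hard part will be the sharp intermediate estimates $T<2\pi(|\beta|+\sqrt{\beta^2-1})<\frac{4\pi}{|\beta|}$ in case~5) and $\frac{2\pi}{|\beta|}<2\pi(|\beta|+\sqrt{\beta^2-1})<T$ in case~6): these do not follow from the quadrant bounds above and require comparing the transcendental root $x^\ast$ of $kx-\chi(x)=\pi$ against the value of $t$ corresponding to $2\pi(|\beta|+\sqrt{\beta^2-1})$. I would handle this by substituting that value into the monotone function $\psi-\chi-\pi$ and determining its sign by elementary inequalities for $\Tg$ and $\arctan$, so that, using the monotonicity already established, the sign fixes on which side of $2\pi(|\beta|+\sqrt{\beta^2-1})$ the root $T$ lies; the purely algebraic bound $2\pi(|\beta|+\sqrt{\beta^2-1})\lessgtr\frac{k\pi}{|\beta|}$ is a routine rationalization of $|\beta|\mp\sqrt{\beta^2-1}$. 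Throughout, the minimality of $T$ and the fact that the arc is genuinely shortest up to its first contact with $C(e)$ are supplied by Propositions~\ref{sss},~\ref{suf},~\ref{kk1} and Theorem~\ref{cutloc}.
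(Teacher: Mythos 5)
Your proposal is correct, and while it keeps the paper's overall skeleton (the endpoint lies in $C(e)=K(e)\cup S_1(e)$; the $S_1(e)$ alternative forces $T=\frac{2\pi}{\sqrt{\beta^2-1}}$; the $K(e)$ alternative is the pair $n\sin\frac{\beta T}{2}-\beta m\cos\frac{\beta T}{2}=0$, $n\cos\frac{\beta T}{2}+\beta m\sin\frac{\beta T}{2}<0$ coming from (\ref{sistem})), it solves that pair by a genuinely different device. The paper reduces it to $\Tg kx=k\Th x$ (resp.\ $\Tg kx=k\Tg x$) plus a sign condition, treats $n=0$ as a separate subcase, and locates the smallest admissible root by tabulating signs of $f$ and $g$ between poles and the critical points read off from the factorization (\ref{qprime}). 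You instead introduce the phase $\chi$ with $\cos\chi=n/\sqrt{1+m^2}$, $\sin\chi=\beta m/\sqrt{1+m^2}$ (legitimate since $n^2+\beta^2m^2=1+m^2$ in all three regimes), so that hitting $K(e)$ becomes $\psi-\chi\in\pi+2\pi\mathbb{Z}$ with $\psi=\frac{\beta t}{2}=kx$, and $m'_t=n/2$, $n'_t=\frac{1-\beta^2}{2}m$ give $\frac{d}{dx}(\psi-\chi)=\frac{km^2}{1+m^2}\geq 0$. This buys uniqueness of the first root for free, a uniform treatment of $n=0$ (your item 4 needs no separate branch), and transparent case boundaries: $\psi-\chi$ equals $(k-1)\pi$ at $x=\pi$ (giving $k=2$, i.e.\ $|\beta|=2/\sqrt{3}$) and $(k-1)\pi/2$ at $x=\pi/2$ (giving $k=3$, i.e.\ $|\beta|=3/(2\sqrt{2})$). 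Two small points to secure: the identities $\chi(\pi)=\pi$, $\chi(\pi/2)=\pi/2$ for $\beta^2>1$ need a line (e.g.\ $\int_0^{\pi}\frac{k\,dx}{1+(k^2-1)\sin^2x}=\pi$, or that $\chi$ is the monotone lift of $\arctan(k\Tg x)$ with $\chi(0)=0$); and for item 1 one should note $\gamma\left(\frac{2\pi}{\sqrt{\beta^2-1}}\right)\neq e$ because $k\in(1,2]$ is not an odd integer. Finally, the step you flag as hard is in fact immediate in your setup: the time $2\pi(|\beta|+\sqrt{\beta^2-1})$ corresponds exactly to $x_0=\frac{\pi}{k-1}$, where $\psi-\pi=x_0$, so $\psi-\chi-\pi=x_0-\chi(x_0)$; since $\Tg\chi=k\Tg x$ forces $\chi>x$ on $\left(0,\frac{\pi}{2}\right)$ and $\chi<x$ on $\left(\frac{\pi}{2},\pi\right)$, this is negative for $k>3$ and positive for $2<k<3$, and monotonicity of $\psi-\chi$ places $T$ on the correct side of $2\pi(|\beta|+\sqrt{\beta^2-1})$ --- exactly reproducing the paper's localization of the root in $\left(\frac{\pi}{k-1},\frac{3\pi}{2k}\right)$, resp.\ $\left(\frac{3\pi}{2k},\frac{\pi}{k-1}\right)$.
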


\begin{proof}

Let $\beta\neq 0$ and $\gamma=\gamma(\beta,\phi;t)$, $0\leq t\leq T$, be a noncontinuable shortest arc (\ref{geod}). It follows from Definition \ref{cutl} and Theorem  \ref{cutloc} that $c:=\gamma(T)$ belongs to $C(e)=K(e)\cup S_1(e)$.

Assume that $c\in S_1(e)$. Then $m(c)=0$ on the ground of (\ref{congloc}) and Proposition \ref{so2}.
Therefore $\mid\beta\mid>1$ and $T=\frac{2\pi}{\sqrt{\beta^2-1}}$ by (\ref{mn1}), (\ref{mn2}), (\ref{mn3}). Additionally,
$|\beta|$ is the largest number, for which the right hand side in  (\ref{ss})
is equal to $c,$ and function
$$\xi(\beta)=\frac{\pi\beta}{\sqrt{\beta^2-1}},\quad \beta\in \left[\frac{2}{\sqrt{3}},+\infty\right),$$
is monotonically decreases from $2\pi$ to $\pi.$ In consequence of this and (\ref{ss}), (\ref{congloc}),
to the element $c=-e$ (to the set $S_1(e)$) corresponds $\beta=\pm \frac{2}{\sqrt{3}}$
(the set $\{\beta: \mid\beta\mid\geq\frac{2}{\sqrt{3}}\}).$ Conversely, if $T=\frac{2\pi}{\sqrt{\beta^2-1}}$ and 
$\mid\beta\mid\geq\frac{2}{\sqrt{3}}$ then $c\in S_1(e)$ by (\ref{ss}). Item~1) is proved.

Assume that $c\in K(e)$. Then by (\ref{cutloc1}) and (\ref{sistem}),
\begin{equation}
\label{equiv}
n\sin\frac{\beta T}{2}-\beta m\cos\frac{\beta T}{2}=0,\quad n\cos\frac{\beta T}{2}+\beta m\sin\frac{\beta T}{2}<0.
\end{equation}

Let $\mid\beta\mid=1$. In view of (\ref{mn1}), the conditions (\ref{equiv}) can be written in the form
$$\Tg{\frac{T}{2}}=\frac{T}{2},\quad \cos\frac{T}{2}<0.$$
Then $\pi<\frac{T}{2}<\frac{3\pi}{2}$, i.e. $T\in (2\pi,3\pi)$. Therefore,
$$\cos{\frac{T}{2}}=\frac{-1}{\sqrt{1+\Tg^2{\frac{T}{2}}}}=\frac{-1}{\sqrt{1+(T/2)^2}},\quad
\sin{\frac{T}{2}}=\Tg{\frac{T}{2}}\cdot\cos{\frac{T}{2}}=
\frac{-T/2}{\sqrt{1+(T/2)^2}}$$
and item~2) of Theorem \ref{mai} is proved.

Let $0<\mid\beta\mid<1$. In view of  (\ref{mn2}), the conditions (\ref{equiv}) can be written in the form
\begin{equation}
\label{t}
\Tg{\left(\frac{\mid\beta\mid T}{2}\right)}=\frac{\mid\beta\mid}{\sqrt{1-\beta^2}}
\Th{\left(\frac{T\sqrt{1-\beta^2}}{2}\right)},\quad \cos{\frac{\mid\beta\mid T}{2}}<0.
\end{equation}
Let us use the notation (\ref{kx}). Then $x>0$, $k>0$, and conditions  (\ref{t}) take the form
\begin{equation}
\label{tr}
f(x)=f(k,x):=\Tg{kx}-k\Th{x}=0,\quad \cos{kx}<0.
\end{equation}

Let us fix $k>0$. Note that the function $f(x)$ increases because
\begin{equation}
\label{fpr}
f^{\prime}(x)=k\left(\frac{1}{\cos^2kx}-\frac{1}{\Ch^2x}\right)>0.
\end{equation}
Besides,
$$\lim\limits_{x\rightarrow 0}f(x)=0,\quad \lim\limits_{x\rightarrow\frac{\pi}{2k}-0}f(x)=+\infty,\quad
\lim\limits_{x\rightarrow\frac{\pi}{2k}+0}f(x)=-\infty,$$
$$f\left(\frac{\pi}{k}\right)=-k\Th\frac{\pi}{k}<0,\quad
\lim\limits_{x\rightarrow\frac{3\pi}{2k}-0}f(x)=+\infty.$$
Therefore $f(x)>0$ for any $x\in\left(0,\frac{\pi}{2k}\right)$, $f(x)<0$ for any $x\in\left(\frac{\pi}{2 k},\frac{\pi}{k}\right]$ and $f(x)$ has a unique zero on interval $\left(\frac{\pi}{k},\frac{3\pi}{2k}\right)$. It follows from this and (\ref{kx}) that
$$\frac{\pi\sqrt{1-\beta^2}}{\mid\beta\mid}<\frac{T\sqrt{1-\beta^2}}{2}<\frac{3\pi\sqrt{1-\beta^2}}{2\mid\beta\mid},$$ i.e.
$T\in\left(\frac{2\pi}{\mid\beta\mid},\frac{3\pi}{\mid\beta\mid}\right)$. Now by (\ref{tr}) and inclusion $kx\in (\pi,3\pi/2)$,
$$\cos{kx}=\frac{-1}{\sqrt{1+\Tg^2{kx}}}=\frac{-1}{\sqrt{1+k^2\Th^2x}},$$
$$\sin{kx}=\Tg{kx}\cdot\cos{kx}=\frac{-k\Th x}{\sqrt{1+k^2\Th^2x}}.$$
Item~3) of Theorem \ref{mai} is proved.
It remains to consider the case $1<\mid\beta\mid<\frac{2}{\sqrt{3}}$.

At first assume that  $n=0$. It follows from (\ref{mn3}), (\ref{equiv}), and Proposition \ref{sss} that
\begin{equation}
\label{nnt}
T=\frac{\pi}{\sqrt{\beta^2-1}},\quad\cos\frac{\beta T}{2}=0.
\end{equation}
This implies that
\begin{equation}
\label{beta}
\frac{\mid\beta\mid T}{2}=\frac{k\pi}{2}=\frac{\pi}{2}+\pi l,\quad \frac{\mid\beta\mid}{\sqrt{\beta^2-1}}=k=1+2l,\quad
l\in\mathbb{N}.
\end{equation}

If $l=1$ then $k=3,$ $\mid\beta\mid=\frac{3}{2\sqrt{2}}$, $T=2\sqrt{2}\pi$.

It follows from (\ref{beta}) that $k>3,$ $1<\mid\beta\mid<\frac{3}{2\sqrt{2}},$ and $T>\frac{3\pi}{\mid\beta\mid}$ if $l>1.$

Now assume that $n\neq 0$. Then in view of (\ref{mn3}), the equality (\ref{equiv}) can be written in the form
\begin{equation}
\label{t1}
\Tg{\left(\frac{\mid\beta\mid T}{2}\right)}=\frac{\mid\beta\mid}{\sqrt{\beta^2-1}}
\Tg{\left(\frac{T\sqrt{\beta^2-1}}{2}\right)}.
\end{equation}
Let us use the notation (\ref{xk}). It follows from (\ref{mn3}) and Proposition \ref{sss} that
\begin{equation}
\label{ineq}
x\in\left(0,\frac{\pi}{2}\right)\cup\left(\frac{\pi}{2},\pi\right),\quad k>2,
\end{equation}
and the equality (\ref{t1}) takes the form
\begin{equation}
\label{t0}
g(x)=g(k,x):=\Tg{kx}-k\Tg{x}=0.
\end{equation}
One can easily see that in view of (\ref{mn3}), (\ref{t0}), and our notation, the inequality in (\ref{equiv}) is equivalent to condition $\cos x\cos kx<0$.

Let us fix $k>2$ and consider the function $g(x)$ on 
$\left(0,\frac{\pi}{2}\right)\cup\left(\frac{\pi}{2},\pi\right)$.
Note that
\begin{equation}
\label{qprime}
g^{\prime}(x)=\frac{k}{\cos^2 kx}-\frac{k}{\cos^2 x}=k\left(\Tg^2 kx-\Tg^2 x\right)=\frac{k\sin[(k-1)x]\sin[(k+1)x]}{\cos^2 x\cos^2 kx}.
\end{equation}

Let $k=3,$ which is equivalent to equality $\mid\beta\mid=\frac{3}{2\sqrt{2}}$. Using the tangent of the triple argument formula, we get
$$g(x)=\Tg 3x-3\Tg x=\frac{3\Tg x-\Tg^3 x}{1-3\Tg^2 x}-3\Tg x=\frac{8\Tg^3 x}{1-3\Tg^2 x}.$$
Therefore for $k=3$ the equation (\ref{t0}) has no solution on 
$\left(0,\frac{\pi}{2}\right)\cup\left(\frac{\pi}{2},\pi\right)$. Together with the above-mentioned case $n=0,$ this fact proves item~4) of Theorem \ref{mai}.

Let $2<k<3,$ i.e. $\frac{3}{2\sqrt{2}}<\mid\beta\mid<\frac{2}{\sqrt{3}}$. One can easily see that  $x\neq\frac{\pi}{2k}$, $x\neq\frac{3\pi}{2k}$ and the equation $g^{\prime}(x)=0$  has four different solutions $x=\frac{\pi}{k+1}$, $x=\frac{2\pi}{k+1}$, $x=\frac{3\pi}{k+1},$ and $x=\frac{\pi}{k-1}$ on 
 $\left(0,\frac{\pi}{2}\right)\cup\left(\frac{\pi}{2},\pi\right)$, while
$$0<\frac{\pi}{2k}<\frac{\pi}{k+1}<\frac{\pi}{2}<\frac{2\pi}{k+1}<\frac{3\pi}{2k}<\frac{\pi}{k-1}<\frac{3\pi}{k+1}<\pi.$$
Note that
\begin{equation}
\label{nn1}
\lim\limits_{x\rightarrow 0}g(x)=0,\quad
\lim\limits_{x\rightarrow\frac{\pi}{2k}-0}g(x)=+\infty,\quad
\lim\limits_{x\rightarrow\frac{\pi}{2k}+0}g(x)=-\infty,
\end{equation}
$$g\left(\frac{\pi}{k+1}\right)=-(k+1)\Tg\frac{\pi}{k+1}<0,\quad
\lim\limits_{x\rightarrow\frac{\pi}{2}-0}g(x)=-\infty,\quad
\lim\limits_{x\rightarrow\frac{\pi}{2}+0}g(x)=+\infty,$$
$$g\left(\frac{2\pi}{k+1}\right)=-(k+1)\Tg\frac{2\pi}{k+1}>0,\quad
\lim\limits_{x\rightarrow\frac{3\pi}{2k}-0}g(x)=+\infty,$$
$$\lim\limits_{x\rightarrow\frac{3\pi}{2k}+0}g(x)=-\infty,\quad
g\left(\frac{\pi}{k-1}\right)=(1-k)\Tg\frac{\pi}{k-1}>0.$$
It follows from this and (\ref{qprime}) that  for $2<k<3$ the equation (\ref{t0}) is solvable on 
$\left(0,\frac{\pi}{2}\right)\cup\left(\frac{\pi}{2},\pi\right)$, and the smallest root of this equation belongs to interval
$\left(\frac{3\pi}{2k},\frac{\pi}{k-1}\right).$ In view of (\ref{xk}), this means that
$$\frac{3}{2\sqrt{2}}<\mid\beta\mid<\frac{2}{\sqrt{3}},\quad \frac{3\pi}{\mid\beta\mid}<T<2\pi\left(\mid\beta\mid+\sqrt{\beta^2-1}\right)<\frac{4\pi}{\mid\beta\mid}.$$
Then $\cos x<0$, $\Tg x < 0,$ $\cos kx>0,$  and on the ground of (\ref{t0}),
$$\cos kx=\frac{1}{\sqrt{1+\Tg^2 kx}}=\frac{1}{\sqrt{1+k^2\Tg^2 x}},$$
$$\sin kx=\Tg kx\cdot\cos kx=\frac{k\Tg x}{\sqrt{1+k^2\Tg^2 x}}< 0.$$
Item~5) of Theorem \ref{mai} is proved.

Let $k>3,$ i.e. $1<\mid\beta\mid<\frac{3}{2\sqrt{2}}$. One can easily see that $x\neq\frac{\pi}{2k}$, $x\neq\frac{3\pi}{2k}$ and the equation  $g^{\prime}(x)=0$ has on interval $\left(0,\frac{\pi}{2}\right)$ at least five  different solutions $x=\frac{\pi}{k+1}$, $x=\frac{\pi}{k-1}$, $x=\frac{2\pi}{k+1}$, $x=\frac{2\pi}{k-1}$, $x=\frac{3\pi}{k+1}$, while
$$0<\frac{\pi}{2k}<\frac{\pi}{k+1}<\frac{\pi}{k-1}<\frac{3\pi}{2k}<\frac{2\pi}{k+1}<\frac{\pi}{2}.$$
Note that the equalities (\ref{nn1}) are fulfilled and
$$g\left(\frac{\pi}{k+1}\right)=-(k+1)\Tg\frac{\pi}{k+1}<0,\quad
g\left(\frac{\pi}{k-1}\right)=(1-k)\Tg\frac{\pi}{k-1}<0,$$
$$\lim\limits_{x\rightarrow\frac{3\pi}{2k}-0}g(x)=+\infty.$$
It follows from this and (\ref{qprime}) that for $k>3$ the equation (\ref{t0}) is solvable on $\left(0,\frac{\pi}{2}\right)\cup\left(\frac{\pi}{2},\pi\right)$, and the smallest root of this equation  belongs to interval
$\left(\frac{\pi}{k-1},\frac{3\pi}{2k}\right).$ Since we proved previously that $T>\frac{3\pi}{\mid\beta\mid}$ for $n=0,$ then
$$\frac{2\pi}{\mid\beta\mid}<2\pi\left(\mid\beta\mid+\sqrt{\beta^2-1}\right)<T<\frac{3\pi}{\mid\beta\mid}.$$
Hence $\cos x>0$, $\cos kx<0$ and on the ground of (\ref{t0}),
$$\cos kx=\frac{-1}{\sqrt{1+\Tg^2 kx}}=\frac{-1}{\sqrt{1+k^2\Tg^2 x}},$$
$$\sin kx=\Tg kx\cdot\cos kx=\frac{-k\Tg x}{\sqrt{1+k^2\Tg^2 x}}< 0.$$
Item~6) of Theorem \ref{mai} is proved.
\end{proof}

\begin{theorem}
\label{mai2}
Let $\gamma=\gamma(\beta,\phi; t)$, $0\leq t\leq T$, be a noncontinuable shortest arc (\ref{geod}). Then

1) Function $T=T(\mid\beta\mid)$ strictly decreases on intervals
 $\left(0,\frac{3}{2\sqrt{2}}\right]$, $\left[\frac{2}{\sqrt{3}},+\infty\right)$ and strictly increases on segment
 $\left[\frac{3}{2\sqrt{2}},\frac{2}{\sqrt{3}}\right]$.

2) Function $T=T(\mid\beta\mid)$ is continuous, piecewise real--analytic and
$T(0,+\infty)=(0,+\infty)$.

3) Function $T=T(\mid\beta\mid)$ has local minimum $2\sqrt{2}\pi$ at
$3/2\sqrt{2}$ and  local maximum $2\sqrt{3}\pi$ at $2/\sqrt{3}.$
\end{theorem}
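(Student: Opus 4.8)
The plan is to extract from Theorem~\ref{mai} that on each range of $|\beta|$ the length $T$ is either explicit or determined by a single real-analytic equation, and to govern $T$ by differentiating these relations. First, on $[2/\sqrt3,+\infty)$ item~1 gives $T=2\pi/\sqrt{\beta^2-1}$, which is real-analytic and strictly decreasing, with $T(2/\sqrt3)=2\sqrt3\pi$; this disposes of the last interval and the value at $2/\sqrt3$. On the remaining ranges I would pass to the variable $k$ of (\ref{kx}) and (\ref{xk}). For $\beta^2<1$ the map $|\beta|\mapsto k=|\beta|/\sqrt{1-\beta^2}$ is an increasing analytic bijection of $(0,1)$ onto $(0,+\infty)$; for $\beta^2>1$ the map $|\beta|\mapsto k=|\beta|/\sqrt{\beta^2-1}$ is a decreasing analytic bijection of $(1,+\infty)$ onto $(1,+\infty)$ sending $3/(2\sqrt2)$ and $2/\sqrt3$ to $k=3$ and $k=2$. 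Since $T=2x\sqrt{1+k^2}$ in the hyperbolic case and $T=2x\sqrt{k^2-1}$ in the trigonometric case, it suffices to follow the smallest root $x=x(k)$ of the defining equation.

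The core is implicit differentiation of $f(k,x)=\Tg kx-k\Th x=0$ (item~3) and $g(k,x)=\Tg kx-k\Tg x=0$ (items~5 and 6). Invoking the root relations $\Tg kx=k\Th x$, resp. $\Tg kx=k\Tg x$, the $x$-derivatives collapse to $f_x=k(1+k^2)\Th^2x$ and $g_x=k(k^2-1)\Tg^2x$, both strictly positive at the relevant root (for $f$ globally, which is (\ref{fpr}); for $g$ because $\Tg x\neq0$ there), so $x(k)$ is real-analytic by the implicit function theorem. Substituting $x'(k)=-f_k/f_x$, resp. $-g_k/g_x$, into $dT/dk$ and simplifying, every term carrying a factor $x\Th^2x$, resp. $x\Tg^2x$, cancels, leaving
\begin{equation}
\label{dTdk}
\frac{dT}{dk}=\frac{2\sqrt{1+k^2}\,(\Th x-x)}{k(1+k^2)\Th^2x},\qquad
\frac{dT}{dk}=\frac{2\sqrt{k^2-1}\,(\Tg x-x)}{k(k^2-1)\Tg^2x}.
\end{equation}
As $\Th x<x$ for $x>0$, the first quantity is negative, so $T$ decreases in $k$ and hence, $k$ increasing with $|\beta|$, decreases on $(0,1)$. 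In the trigonometric case the sign is that of $\Tg x-x$: on the item~6 interval $x\in(0,\pi/2)$, where $\Tg x>x$, so $T$ increases in $k$ and thus, $k$ decreasing in $|\beta|$, decreases on $(1,3/(2\sqrt2))$; on the item~5 interval $x\in(\pi/2,\pi)$, where $\Tg x<x$, so $T$ decreases in $k$ and thus increases on $(3/(2\sqrt2),2/\sqrt3)$. This yields part~1 on the open subintervals; strict monotonicity across $|\beta|=1,\,3/(2\sqrt2),\,2/\sqrt3$ then follows by gluing, once continuity is in hand.

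For part~2, real-analyticity on each open subinterval is furnished by the explicit formula and the implicit function theorem, and $k(|\beta|)$ is analytic. Continuity reduces to matching values at the three junctions: as $|\beta|\to1$ both defining equations degenerate (since $kx\to T/2$ and $\Th x,\Tg x\sim x$) to $\Tg(T/2)=T/2$ with $T/2\in(\pi,3\pi/2)$, i.e. item~2; at $k=3$ item~4 gives $2\sqrt2\pi$ while the item~5 and item~6 roots both tend to $x=\pi/2$, the $n=0$ value, giving $T=2x\sqrt{k^2-1}\to2\sqrt2\pi$; at $k=2$ the item~5 root tends to $x=\pi$, giving $T\to2\sqrt3\pi$, in agreement with item~1. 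Surjectivity $T(0,+\infty)=(0,+\infty)$ follows from $T\to0$ as $|\beta|\to+\infty$ (item~1), from $T>2\pi/|\beta|\to+\infty$ as $|\beta|\to0^+$ (item~3), and the intermediate value theorem. Finally, part~3 is immediate: $T$ decreases on $(0,3/(2\sqrt2)]$ and increases on $[3/(2\sqrt2),2/\sqrt3]$, so $3/(2\sqrt2)$ gives the local minimum $2\sqrt2\pi$ (item~4); and $T$ increases on $[3/(2\sqrt2),2/\sqrt3]$ and decreases on $[2/\sqrt3,+\infty)$, so $2/\sqrt3$ gives the local maximum $2\sqrt3\pi$ (item~1).

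The implicit differentiation is not the real difficulty: the identity (\ref{dTdk}), with its clean numerators $\Th x-x$ and $\Tg x-x$, renders all three monotonicity signs transparent. I expect the genuine obstacle to be the continuity (and the gluing of monotonicity it supports) at the two exceptional values $|\beta|=3/(2\sqrt2)$ and $|\beta|=2/\sqrt3$, where the branch describing the endpoint $c=\gamma(T)$ changes character: at $3/(2\sqrt2)$ the relevant root coincides with the degenerate $n=0$ solution lying on the boundary of the admissible interval for $x$, and at $2/\sqrt3$ the cut point passes from $K(e)$ into $S_1(e)$ (the shared point $-e$). Verifying that the implicitly defined roots approach exactly the boundary values $x=\pi/2$ and $x=\pi$, rather than some other root, is the step demanding the most care.
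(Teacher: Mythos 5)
Your proposal is correct and follows essentially the same route as the paper: implicit differentiation of $f(k,x)=\Tg kx-k\Th x$ and $g(k,x)=\Tg kx-k\Tg x$ in the variable $k$ of (\ref{kx}), (\ref{xk}), with the sign of $dT/dk$ governed by $x-\Th x>0$ and by the sign of $x-\Tg x$ on $(0,\pi/2)$ versus $(\pi/2,\pi)$, then translated back through the monotonicity of $k(|\beta|)$; your closed form (\ref{dTdk}) is exactly the paper's $-f'_k/f'_T$ and $-g'_k/g'_T$ after the same simplification at the root. The only difference is that you spell out the continuity matching at $|\beta|=1$, $k=3$, $k=2$, which the paper dismisses as easy to verify.
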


\begin{proof}

1) In consequence of item~1 of Theorem \ref{mai}, it is true for $\mid\beta\mid\geq\frac{2}{\sqrt{3}}.$

If $0<\mid\beta\mid<1$ then the relations (\ref{tr}) and (\ref{kx}) define implicit function $T=T(k)$. Additionally, in consequence of (\ref{fpr}), (\ref{kx}), and item 3 of Theorem \ref{mai},
$$f'_T=f'_x\cdot x'_T=\frac{k}{2\sqrt{1+k^2}}\left(\frac{1}{\cos^2kx}-\frac{1}{\Ch^2x}\right)>0,$$
$$f'_k=\frac{x}{\cos^2kx}-\Th x -\frac{kT}{2}\left(\frac{1}{\cos^2kx}-\frac{1}{\Ch^2x}\right)\frac{k}{(1+k^2)^{3/2}}=$$
$$x(1+k^2\Th^2x)-\Th x -\frac{xk^2}{1+k^2}(1+k^2\Th^2x-(1-\Th^2x))=x-\Th x >0.$$
Then by the theorem about the derivative of implicit function,
$\frac{dT}{d k}=-\frac{f^{\prime}_{k}}{f^{\prime}_{T}}<0;$
since $k'_{\beta}=1/(1-\beta^2)^{3/2}>0,$ then $T=T(|\beta|)$ strictly decreases for
$\beta^2 < 1.$

Let $1<\mid\beta\mid<\frac{2}{\sqrt{3}}$ and
$\mid\beta\mid \neq \frac{3}{2\sqrt{2}}$. Then the relations (\ref{t0}) and (\ref{xk}) define implicit function $T=T(k).$ Additionally, in consequence of  (\ref{qprime}), (\ref{xk}), items 5), 6) of Theorem \ref{mai},
$$g'_T=g'_x\cdot x'_T=\frac{k}{2\sqrt{k^2-1}}\left(\frac{1}{\cos^2kx}-\frac{1}{\cos^2x}\right)=$$
$$\frac{k}{2\sqrt{k^2-1}}(1+k^2\Tg^2x-(1+\Tg^2x))=\frac{k\sqrt{k^2-1}\Tg^2x}{2} > 0,$$
$$g'_k=\frac{x}{\cos^2kx}-\Tg x -\frac{kT}{2}\left(\frac{1}{\cos^2kx}-\frac{1}{\cos^2x}\right)\frac{k}{(k^2-1)^{3/2}}=$$
$$x(1+k^2\Tg^2x)-\Tg x - x\frac{k^2}{(k^2-1)}(1+k^2\Tg^2x-(1+\Tg^2x))=x-\Tg x.$$

It follows from the proof of Theorem \ref{mai} that
$x\in\left(0,\frac{\pi}{2}\right)$ for $1<\mid\beta\mid<\frac{3}{2\sqrt{2}}$ and
$x\in\left(\frac{\pi}{2},\pi\right)$ for $\frac{3}{2\sqrt{2}}<\mid\beta\mid<\frac{2}{\sqrt{3}}$.
Consequently, by the theorem about the derivative of  implicit function,
$\frac{dT}{d k}=-\frac{g^{\prime}_{k}}{g^{\prime}_{T}}$ is positive
for $1<\mid\beta\mid<\frac{3}{2\sqrt{2}}$ and negative for
$\frac{3}{2\sqrt{2}}<\mid\beta\mid<\frac{2}{\sqrt{3}}$.
Since $k'_{\beta}=-1/(\beta^2-1)^{3/2}< 0,$ then $T=T(|\beta|)$ strictly decreases for
$1<\mid\beta\mid<\frac{3}{2\sqrt{2}}$ and strictly increases for
$\frac{3}{2\sqrt{2}}<\mid\beta\mid<\frac{2}{\sqrt{3}}$.

2) The required statement is easy to verify on the ground of the proof of item 1) of Theorem \ref{mai2} and 
Theorem \ref{mai}.

3) The statement follows from items 1, 2 of Theorem \ref{mai2} and items 1), 4) of Theorem \ref{mai}.
\end{proof}


\begin{thebibliography}{99}

\bibitem{Ber0}
{\sl Berestovskii~V.N.},
Universal methods of the search of normal geodesics on Lie groups with left-invariant sub-Riemannian metric. {\it Siber. Math. J.} 2014. V. 55, \No 5.
P. 783--791.

\bibitem{BR}
{\sl Boscain~U., Rossi~F.}, Invariant Carnot-Carath\'eodory metrics on $S^3,$ $SO(3),$ $SL(2),$ and lens spaces.
{\it SIAM J. Control Optim.} 2008. V. 47, \No 4. P. 1851--1878.

\bibitem{Ber1}
{\sl Berestovskii V.~N.},
(Locally) shortest arcs of special sub-Riemannian metric on the Lie group $SO_0(2,1)$ (Russian).
 {\it Algebra and Anal. 2015.} V. 27, \No 1. P. 959--970.

\bibitem{BerZub}
{\sl Berestovskii V.~N., Zubareva I.~A.},  Geodesics and shortest arcs of a special sub-Riemannian metric on the Lie group $SO(3)$. {\it Siber. Math. J.}, 2015, v. 56, \No 4, pp. 602-611. DOI: 10.17377/smzh.2015.56.404.

\bibitem{Selb}
{\sl Selberg~А.}, Harmonic analysis and discontinuous groups in weakly symmetric Riemannian spaces with applications to Dirichlet series {\it J. Indian Math. Soc. (N.S.)} 20(1956). P. 47--87.

\bibitem{Yak}
{\sl Yakimova~O.S.},
Weakly symmetric Riemannian manifolds with a reductive isometry group (Russian) {\it Sb. Math.} 195(2004), no. 3-4, P. 599-614.

\bibitem{Ber2}
{\sl Berestovskii V.~N.}, Homogeneous spaces with intrinsic metric II . {\it Siber. Math. J.} 30 (1989). \No 2, 180-191. 


\bibitem{BGo}
{\sl Berestovskii~V.N., Gorbatsevich~V.V.},  Homogeneous spaces with inner metric and with integrable invariant distributions. {\it Analysis and Mathematical Physics.} 2014. V. 4, \No. 4. P. 263--331. DOI: 10.1007/s13324-0083-z.


\bibitem{Gant}
{\sl Gantmacher~F.R.}, The theory of matrices. Vol.~1, 2. Taylor \&  Francis, 1959.


\bibitem{VG}
{\sl Vershik~A.M., Gershkovich~V.Ya.}, Nonholonomic dynamical systems. Geometry of distributions and variational problems. (Russian). Current problems in mathematics. Fundamental directions, Vol. 16 (Russian), 5--85, 307, {\it Itogi Nauki i Techniki, Akad. Nauk SSSR, Vsesoyuz. Inst. Nauchn. i Techn. Inform.}, Moscow, 1987.

\bibitem{GKM}
{\sl Gromoll~D., Klingenberg~W., Meyer~W.},
Riemannsche Geometrie im Grossen. (German) {\it Lecture Notes in Mathematics, \No. 55.} Springer--Verlag, Berlin--New York, 1968.

\bibitem{BG}
{\sl Berestovskii~V.N., Guijarro~L.}, A metric characterization of Riemannian submersions.
{\it Ann. Global Anal. Geom.} 2000. V. 18, \No 6. P. 577--588.

\bibitem{BerZub1}
{\sl Berestovskii V.~N., Zubareva I.~A.},  Sub-Riemannian distance on Lie groups $SU(2)$ and $SO(3)$ (Russian).
{\it Mat. trudy.} Vol. 18(2015). \No 2. P. 1--19.

\bibitem{CF}
{\sl Cohn--Vossen~S.}, Existenz k\"urzester Wege (German) {\it Compositio Math.} 1936. V. 3. P. 441--452.


\end{thebibliography}
\end{document}